\documentclass[11pt]{article}
\usepackage{amssymb}
\usepackage{amsmath}
\pagestyle{plain}
\input{epsf.sty}  
\usepackage{color}
% \textcolor[rgb]{1.00,0.00,0.00}{testo rosso}

% \textcolor[rgb]{1.00,0.00,0.00}{testo blu}

%
\setlength{\textheight}{9.15in}     % altezza del testo
\setlength{\textwidth}{6.2in}      % larghezza del testo
\setlength{\evensidemargin}{0.05in} % margine sinistro per pagine pari
\setlength{\oddsidemargin}{0.05in}  % margine sinistro per pagine dispari
\setlength{\headsep}{0pt}          % spazio tra testa e corpo
\setlength{\topmargin}{0in}        % margine superiore
\newtheorem{theorem}{Theorem} 
\newtheorem{remark}{Remark}
\newtheorem{definition}{Definition}

\newtheorem{example}{Example} 
\newtheorem{proposition}{Proposition} 
\newtheorem{corollary}{Corollary} 
\newtheorem{counterexample}{Counterexample} 

\newcommand{\barF}{\overline{F}}

      % full box
%
\newenvironment{proof}{\begin{trivlist}
\item[\hspace{\labelsep}{\bf\noindent Proof. }]}
{$\hfill\Box$\end{trivlist}}
\linespread{1.35}                        % comando di interlinea
\title{\huge\bf 
A QUANTILE-BASED PROBABILISTIC \\
MEAN VALUE THEOREM
}
\author{
{\bf Antonio  Di Crescenzo}\footnote{
Dipartimento di Matematica, Universit\`a di Salerno, 
Via Giovanni Paolo II, 132; 84084 Fisciano (SA), Italy; 
email: adicrescenzo@unisa.it}, 
\quad 
{\bf Barbara Martinucci}\footnote{
Dipartimento di Matematica, Universit\`a di Salerno, 
Via Giovanni Paolo II, 132; 84084 Fisciano (SA), Italy; 
email: bmartinucci@unisa.it}, 
\quad  
{\bf Julio Mulero}\footnote{
Departamento de Estad\'{\i}stica e Investigaci\'on Operativa, Universidad de Alicante,
Apartado de Correos, 99; 03080 Alicante, Spain; 
email: julio.mulero@ua.es
}}
%  
%----------------------------------------------------------------------------
\begin{document}
%----------------------------------------------------------------------------
%
\maketitle
\begin{abstract}
For nonnegative random variables with finite means we introduce an analogous of the equilibrium 
residual-lifetime distribution based on the quantile function. This allows to construct new 
distributions with support $(0,1)$, and to obtain a new quantile-based version of the probabilistic 
generalization of Taylor's theorem. Similarly, for pairs of stochastically ordered random variables 
we come to a new quantile-based form of the probabilistic mean value theorem.
The latter involves a distribution that generalizes the Lorenz curve. We investigate the special 
case of proportional quantile functions and apply the given results to various models based on 
classes of distributions and measures of risk theory. Motivated by some stochastic comparisons, 
we also introduce the `expected reversed proportional shortfall order', and a new 
characterization of random lifetimes involving the reversed hazard rate function. 

\smallskip\noindent
{\em Short title:\/} A quantile-based probabilistic mean value theorem.
%{\em Keywords:\/} 
%Lorenz curve, equilibrium distribution, stochastic order,  
%conditional value-at-risk, average value-at-risk, reversed hazard rate.

%\smallskip\noindent
%AMS Subject Classification: 
%primary: 60E05, % Distributions: general theory
%secondary: 60E15,  % Inequalities; stochastic orderings
%91B30. % Risk theory, insurance
\end{abstract}
%----------------------------------------------------------------------------
\section{Introduction}
%----------------------------------------------------------------------------
The quantile function, being the inverse of the cumulative distribution function of a random variable, is 
often invoked in applied probability and statistics. In certain cases the approach based on quantile functions 
is more fruitful than the use of cumulative distribution functions, since quantile functions are less influenced 
by extreme statistical observations. For instance, quantile functions can be properly employed to formulate 
properties of entropy function and other information measures for nonnegative absolutely continuous 
random variables (see  Sunoj and Sankaran \cite{SuSa12} and Sunoj {\em et al.}\ \cite{SuSa13}). 
They are also employed in problems that ask for comparisons based on variability stochastic orders such 
as the dilation order, the dispersive order (see Shaked and Shanthikumar \cite{ShSh07}) or the TTT 
transform order (cf.\ Kochar {\em et al}.\ \cite{KoSh2002}). In addition, several notions of risk theory 
and mathematical finance are expressed in terms of quantile functions (see, for instance, 
Belzunce {\em et al.}\ \cite{BePiRuSo12} and \cite{BePiRuSo13}). 
\par 
In this paper we use the quantile functions in order 
to build some stochastic models and obtain various results involving distributions with support $(0,1)$. 
We are motivated by previous researches in which the equilibrium distribution of nonnegative 
random variables plays a key role and allows to obtain probabilistic generalizations of Taylor's 
theorem (see \cite{Li94} and \cite{MaWh93}) and of the  mean value theorem (see \cite{DiCr99}). 
\par
In Section \ref{section:prel} we present some preliminary notions on 
quantile function and Lorenz curve. Then, in Section \ref{section:Taylor} we obtain a probabilistic 
generalization of TaylorÕs theorem based on a suitably defined `quantile analogue' of the equilibrium 
distribution, whose density is an extension of the Lorenz curve based on stochastically ordered 
random variables. Moreover, such distribution is involved in a quantile-based version of the 
probabilistic mean value theorem provided in Section \ref{section:mvt}. 
A special case dealing with proportional quantile functions is also discussed. 
Finally, various examples of applications are considered in Section \ref{section:appl}: 
the first involves typical classes of distributions (NBU and IFR notions) and  conditional value-at-risks; 
the second involves concepts of risk theory, as the proportional conditional value-at-risk; the third and 
the fourth applications are founded on distribution functions defined as suitable ratios of quantile functions, 
and involve the notion of average value-at-risk. 
\par
We point out that, aiming to obtain useful stochastic comparisons, in this paper we introduce two new 
concepts that deserve interest in the field of stochastic orders and characterizations of distributions. 
In Section \ref{section:mvt} we propose the `expected reversed proportional shortfall order', 
which is dual to a recently proposed stochastic order. In Section \ref{app3} we provide a new 
characterization of random lifetimes, expressed by stating that $x\,\tau(x)$ is decreasing 
for $x>0$, where $\tau(x)$ is the reversed hazard rate function. 
\par
Throughout the paper, $[X\,|\,B]$ denotes a random variable having the same distribution as $X$ conditional 
on $B$, the terms decreasing and increasing are used in non-strict sense, and $g'$ denotes the derivative 
of $g$. 
%Moreover, as usual, $(x)_+$ 
%denotes the positive part of $x$, i.e.\ $(x)_+ = 0$ if $x < 0$ and $(x)_+ = x$ if $x \geq 0$. 
%----------------------------------------------------------------------------
\section{Preliminary notions}\label{section:prel}
%----------------------------------------------------------------------------
Given a  random variable $X$, let us denote its distribution function by $F(x)=P(X\leq x)$, 
$x\in\mathbb{R}$, and its complementary distribution function by $\overline F(x)=1-F(x)$, $x\in\mathbb{R}$. 
The quantile function of $X$, when existing, is given by 
\begin{equation}
 Q(u)=\inf\{x\in\mathbb{R} : F(x)\geq u\}, 
 \qquad 0<u<1.
 \label{eq:defQu}
\end{equation}
Moreover, if $Q(u)$ is differentiable, the quantile density function of $X$ is given by  
\begin{equation}
 q(u)=Q'(u), \qquad 0<u<1.  
 \label{eq:defdensqu}
\end{equation}
\begin{definition}
We denote by ${\cal D}$ the family of all absolutely continuous random variables with finite mean  
such that the quantile function (\ref{eq:defQu}) satisfies $Q(0)=0$, and the quantile density 
function (\ref{eq:defdensqu}) exists. 
\end{definition}
A random variable in ${\cal D}$ is thus nonnegative and may represent a distribution of interest 
in actuarial applications or in risk theory, such as an income or a loss. 
If $X\in {\cal D}$, it has finite nonzero mean, and the function 
\begin{equation}
 L(p)=\frac{1}{E[X]}\int_0^p Q(u)\,{\rm d}u, \qquad 0\leq p\leq 1 
 \label{eq:defLp}
\end{equation}
denotes the Lorenz curve of $X$. If the individuals of a given population share a common good such 
as wealth, which is distributed according to $X$, then $L(p)$ gives the cumulative share 
of individuals, from the lowest to the highest, owing the fraction $p$ of the common good. 
Hence, $L(p)$ is often used in insurance to describe the inequality among the incomes of individuals. 
See, for instance, Singpurwalla and Gordon \cite{SiGo12} and Shaked and Shanthikumar \cite{ShSh98} 
for various applications of the Lorenz curve and its connections with stochastic orders.  
\par
It is well known that (\ref{eq:defLp}) is the distribution function of an absolutely continuous random 
variable, say $X^L$, taking values in $(0,1)$. In the following proposition we express the mean of an 
arbitrary function of $X^L$ in terms of the quantile function (\ref{eq:defQu}). To this aim we recall 
that if $g:(0,+\infty)\to\mathbb{R}$ is an integrable function then, for all $0\leq p_1<p_2\leq 1$,
\begin{equation}
 \frac{1}{p_2-p_1}\int_{p_1}^{p_2} g(Q(u))\,{\rm d}u
 =E[g(X)\,|\,Q(p_1)<X\leq Q(p_2)]. 
 \label{eq:integrgQ}
\end{equation}
\begin{proposition}\label{prop:EhXL}
Let $X\in {\cal D}$ and let $X^L$ have distribution function $(\ref{eq:defLp})$. If $h:(0,1)\to \mathbb{R}$ 
is such that $h\cdot Q$ is integrable in $(0,1)$, then  
\begin{equation}
 E[h(X^L)]=\frac{1}{E[X]}\int_0^1 h(u)\,Q(u)\,{\rm d}u 
 =\frac{1}{E[X]}\,E[h(F(X))\,X]
 \label{eq:medhXL}
\end{equation}
or, equivalently,
$$
 E[h(X^L)]=\frac{1}{E[Q(U)]}\,E[h(U)\,Q(U)], 
$$
where $U=F(X)$ is uniformly distributed in $(0,1)$. 
\end{proposition}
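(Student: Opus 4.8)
The plan is to establish the first equality in \eqref{eq:medhXL} directly from the fact that $X^L$ has distribution function $L(p)$ given by \eqref{eq:defLp}, and then translate the resulting integral into the two expectation forms by a change of variables. First I would note that since $L$ is absolutely continuous with $L(0)=0$ and $L(1)=1$ (the latter because $\int_0^1 Q(u)\,{\rm d}u = E[X]$), the random variable $X^L$ has density $\ell(p)=L'(p)=Q(p)/E[X]$ for $p\in(0,1)$. Hence for any measurable $h$ with $h\cdot Q$ integrable on $(0,1)$,
$$
E[h(X^L)]=\int_0^1 h(p)\,\ell(p)\,{\rm d}p=\frac{1}{E[X]}\int_0^1 h(p)\,Q(p)\,{\rm d}p,
$$
which is exactly the first expression.

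Next I would handle the second equality $\frac{1}{E[X]}\int_0^1 h(u)Q(u)\,{\rm d}u=\frac{1}{E[X]}E[h(F(X))X]$. The natural tool is the substitution $u=F(x)$, equivalently $x=Q(u)$, under which ${\rm d}u = f(x)\,{\rm d}x$ where $f$ is the density of $X$; since $X\in{\cal D}$ is absolutely continuous with $Q(0)=0$, the map $u\mapsto Q(u)$ pushes the uniform law on $(0,1)$ forward to the law of $X$ on $(0,\infty)$. Under this change of variables $Q(u)\mapsto x$ and $h(u)\mapsto h(F(x))$, so $\int_0^1 h(u)Q(u)\,{\rm d}u=\int_0^\infty h(F(x))\,x\,f(x)\,{\rm d}x=E[h(F(X))\,X]$. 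Alternatively, and perhaps more cleanly, one can invoke \eqref{eq:integrgQ} with $p_1=0$, $p_2=1$ applied to the function $g(x)=h(F(x))\,x$, recalling $F(Q(u))=u$ for a.e.\ $u$, so that $g(Q(u))=h(u)Q(u)$; this immediately gives $\int_0^1 h(u)Q(u)\,{\rm d}u = E[h(F(X))X]$.

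Finally, the equivalent form is just a restatement: writing $U=F(X)$, which is uniform on $(0,1)$ because $X$ is absolutely continuous, we have $Q(U)=Q(F(X))=X$ almost surely (again using absolute continuity), so $E[Q(U)]=E[X]$ and $E[h(U)Q(U)]=E[h(F(X))X]$. Substituting into the first expression yields $E[h(X^L)]=\frac{1}{E[Q(U)]}E[h(U)Q(U)]$.

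I do not anticipate a serious obstacle here; the proof is essentially bookkeeping with the change of variables $u=F(x)$. The one point requiring a little care is the justification that $Q$ is the correct quantile transform for the change of variables — i.e.\ that $F(Q(u))=u$ and $Q(F(x))=x$ hold almost everywhere — which is precisely where absolute continuity of $X$ and the hypothesis $Q(0)=0$ (membership in ${\cal D}$) are used, together with the integrability assumption on $h\cdot Q$ to make every integral finite and apply Fubini/Tonelli-type manipulations legitimately. Citing \eqref{eq:integrgQ} sidesteps even this, reducing the second equality to a direct substitution $g=h(F(\cdot))\,{\rm id}$.
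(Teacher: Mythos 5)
Your proof is correct and follows essentially the same route as the paper's: compute $E[h(X^L)]$ from the density $Q(p)/E[X]$, then obtain the second equality via the identity $F(Q(u))=u$ and Eq.~(\ref{eq:integrgQ}) with $p_1=0$, $p_2=1$ applied to $g(x)=h(F(x))\,x$. You simply spell out the change of variables and the final reformulation in terms of $U=F(X)$ in more detail than the paper does.
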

\begin{proof}
Since $X^L$ has distribution function $(\ref{eq:defLp})$, the proof follows from identity 
$F[Q(u)]=u$, $0<u<1$, and from Eq.\ (\ref{eq:integrgQ}) for $p_1=0$ and $p_2=1$. 
\end{proof}
\par
As an immediate application of Proposition \ref{prop:EhXL} we have that the moments of $X^L$, 
when existing, are given by:
\begin{equation}
 E[(X^L)^k]=\frac{1}{E[Q(U)]}\,E[U^k\,Q(U)], \qquad k=1,2,\ldots. 
 \label{eq:EXLk}
\end{equation}
\begin{remark}\rm 
Let $F(x)=x^{\alpha}$, $0\leq x\leq 1$, $\alpha>0$. Then, $X$ is identically distributed to $X^L$ if, 
and only if, $\alpha$ is equal to the reciprocal of the golden number, i.e.\ $\alpha=(-1+\sqrt{5})/2$. 
\end{remark}
%
%----------------------------------------------------------------------------
\section{An analogous of Taylor's theorem}\label{section:Taylor}
%----------------------------------------------------------------------------
It is well-known that the equilibrium distribution arises as the limiting distribution of the forward recurrence 
time in a renewal process. Its role in applied contexts has been largely investigated (see, as example, 
Gupta \cite{Gu2007} and references therein). For instance, we recall that the iterates of equilibrium 
distributions have been used \\
- to characterize family of distributions (see Unnikrishnan Nair and Preeth \cite{NaPr2009}), \\
- to construct sequences of stochastic orders (see Fagiuoli and Pellerey \cite{FaPe93}), \\
- to determine properties related to the moments of random variables of interest in risk theory 
(see  Lin and Willmot \cite{LiWi2000}).  
\par
Moreover, a probabilistic generalization of Taylor's theorem (studied by Massey and Whitt \cite{MaWh93} 
and Lin \cite{Li94}) allows to express the expectation of a functional of  random variable in terms of 
suitable expectations involving the iterates of its equilibrium distribution. 
\par
We recall that for a random variable $X\in {\cal D}$ the density 
of the equilibrium distribution of $X$ and the density of $X^L$ are given respectively by 
\begin{equation}
 f_{X_e}(x)=\frac{\barF(x)}{E[X]}, 
 \quad 0< x<+\infty,
 \qquad 
 f_{X^L}(u)=\frac{Q(u)}{E[X]}, 
 \quad 0< u<1. 
 \label{eq:densXeXL}
\end{equation}
On the ground of the analogy between such densities, in this section we obtain an analogous of the 
probabilistic generalization of Taylor's theorem which involves $X^L$.  
\begin{theorem}\label{theor:1}
Let $X\in{\cal D}$;  if $g:(0,1)\to \mathbb{R}$ is a differentiable function such that $g'\cdot Q$ 
is integrable on $(0,1)$, then  
\begin{equation}
 E[\{g(1)-g(U)\}\,q(U)]=E\left[g'(X^L)\right]E[X],
 \label{eq:theor1}
\end{equation}
where $U$ is uniformly distributed in $(0,1)$. 
\end{theorem}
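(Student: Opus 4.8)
The plan is to reduce the claimed identity to the formula for $E[h(X^L)]$ in Proposition~\ref{prop:EhXL} by choosing $h$ appropriately, and to handle the leftover term by an integration by parts in the quantile variable. First I would apply Proposition~\ref{prop:EhXL} with $h = g'$, which is legitimate since $g'\cdot Q$ is assumed integrable on $(0,1)$; this gives
$$
 E\!\left[g'(X^L)\right] E[X] = \int_0^1 g'(u)\,Q(u)\,{\rm d}u .
$$
So the theorem will follow once I show that the right-hand side equals $E[\{g(1)-g(U)\}\,q(U)]$, i.e.\ that
$$
 \int_0^1 g'(u)\,Q(u)\,{\rm d}u = \int_0^1 \{g(1)-g(u)\}\,q(u)\,{\rm d}u ,
$$
recalling that $U$ is uniform on $(0,1)$ and $q=Q'$.

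The natural tool for the displayed identity is integration by parts. Writing $\int_0^1 \{g(1)-g(u)\}\,q(u)\,{\rm d}u$ and integrating by parts with the factor $g(1)-g(u)$ differentiated and $q(u)\,{\rm d}u = {\rm d}Q(u)$ integrated, the boundary term is $[(g(1)-g(u))Q(u)]_0^1$, which vanishes at $u=1$ because $g(1)-g(1)=0$ and vanishes at $u=0$ because $Q(0)=0$ for $X\in{\cal D}$; the remaining integral is exactly $\int_0^1 g'(u)Q(u)\,{\rm d}u$. So the two sides agree, and combining with the display from Proposition~\ref{prop:EhXL} finishes the proof. One can equivalently run the argument in the other direction, starting from $\int_0^1 g'(u)Q(u)\,{\rm d}u$ and integrating by parts the other way, which perhaps reads more cleanly.

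The main point requiring care — the ``hard part,'' such as it is — is justifying the integration by parts and the vanishing of the boundary terms under only the stated integrability hypothesis. Since $X$ is absolutely continuous with finite mean and $Q(0)=0$, $Q$ is absolutely continuous on compact subintervals of $(0,1)$ with $\int_0^1 Q(u)\,{\rm d}u = E[X] < \infty$, so $Q(u)\to 0$ as $u\downarrow 0$; and $g$ is differentiable on $(0,1)$, so $g(1)$ should be read as $\lim_{u\uparrow 1} g(u)$, which exists (finite) because $\int_0^1 g'(u)Q(u)\,{\rm d}u$ converges and $Q$ is bounded away from $0$ near $1$. With these observations the boundary evaluation is legitimate and the integration by parts is valid on $(0,1)$; a limiting argument over $[\epsilon,1-\epsilon]$ makes this fully rigorous if desired.
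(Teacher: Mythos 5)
Your proposal is correct in substance and ends at the same place as the paper: both reduce the claim to the identity $\int_0^1 g'(u)\,Q(u)\,{\rm d}u = E[g'(X^L)]\,E[X]$, which is Proposition~\ref{prop:EhXL} with $h=g'$. The difference is in how the left-hand side of (\ref{eq:theor1}) is converted into $\int_0^1 g'(u)Q(u)\,{\rm d}u$: the paper writes $g(1)-g(U)=\int_0^1 g'(u)\,{\bf 1}_{\{U\leq u\}}\,{\rm d}u$ and applies Fubini, using $E[q(U){\bf 1}_{\{U\le u\}}]=Q(u)-Q(0)=Q(u)$, whereas you integrate by parts. These are two dressings of the same computation, but the Fubini route has one concrete advantage: it never produces a boundary term at $u=1$. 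Your justification of that boundary term --- ``it vanishes because $g(1)-g(1)=0$'' --- is incomplete precisely when $X$ has unbounded support, since then $Q(u)\to+\infty$ as $u\uparrow 1$ and the product $(g(1)-g(u))\,Q(u)$ is of the form $0\cdot\infty$. The gap is easily closed: since $Q$ is increasing, $|g(1)-g(u)|\,Q(u)\le \int_u^1 |g'(v)|\,Q(u)\,{\rm d}v\le \int_u^1 |g'(v)|\,Q(v)\,{\rm d}v$, which tends to $0$ as $u\uparrow 1$ by the assumed integrability of $g'\cdot Q$; with that line added (and your $[\epsilon,1-\epsilon]$ truncation), the argument is fully rigorous. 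Your observations that $Q(u)\to 0$ as $u\downarrow 0$ and that $g(1)$ exists as a finite limit because $Q$ is bounded away from $0$ near $1$ are correct and are points the paper leaves implicit.
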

\begin{proof}
From the mean value theorem, Eq.\ (\ref{eq:defdensqu}), and condition $Q(0)=0$ we have 
\begin{equation*}
 \begin{split}
 E[\{g(1)-g(U)\}\,q(U)] &= E\left[q(U) \int_0^1 g'(u)\,{\bf 1}_{\{U\leq u\}}\,{\rm d}u\right] \\
 &= \int_0^1 g'(u)\, E[q(U)\,{\bf 1}_{\{U\leq u\}}] \,{\rm d}u \\
 &= \int_0^1 g'(u)\, Q(u) \,{\rm d}u. 
 \end{split}
\end{equation*}
The proof of (\ref{eq:theor1}) then follows from the first equality in (\ref{eq:medhXL}). 
\end{proof}
\begin{example}\rm 
Let $X$ have Lomax distribution, with quantile function $Q(u)=\lambda[(1-u)^{-1/\alpha}-1]$, 
$0<u<1$, and mean $E[X]=\lambda/(\alpha-1)$, for $\alpha>1$ and $\lambda>0$. Then, 
under the assumptions of Theorem \ref{theor:1} we have 
$$
 E[\{g(1)-g(U)\}\,(1-U)^{-1-1/\alpha}]=E\left[g'(X^L)\right]\,\frac{\alpha}{\alpha-1},
$$ 
where $X^L$ has density $f_{X^L}(u)=(\alpha-1)[(1-u)^{-1/\alpha}-1]$, $0<u<1$. 
\end{example}
\par
Hereafter we extend the result of Theorem \ref{theor:1} to a more general case, in which the 
right-hand-side of (\ref{eq:theor1}) is expressed in an alternative way.  
Let $g^{(n)}$ denote the $n$-th derivative of  $g$, for $n\geq 1$, and let $g^{(0)}=g$.  
\begin{theorem}\label{theor:2}
Let $X\in{\cal D}$;  if  $g:(0,1)\to\mathbb{R}$ is $n$-times differentiable and  
$g^{(n)}\cdot Q$ is integrable on $(0,1)$,  for any $n\geq 1$, then  
\begin{equation}
\begin{split}
 E[\{g(1)-g(U)\}\,q(U)]
 & =\sum_{k=1}^{n-1} \frac{1}{k!}E\left[g^{(k)}(U)(1-U)^kq(U)\right] \\
 & +\frac{1}{(n-1)!}E\left[g^{(n)}(X^L)(1-X^L)^{n-1}\right]E[X],
 \end{split}
 \label{eq:theorn}
\end{equation}
where $U$ is uniformly distributed in $(0,1)$. 
\end{theorem}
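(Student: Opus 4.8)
The plan is to prove \eqref{eq:theorn} by induction on $n$, using Theorem \ref{theor:1} as the base case and integration by parts for the inductive step. For $n=1$ the sum $\sum_{k=1}^{0}$ is empty and $(1-X^L)^0=1$, so \eqref{eq:theorn} reduces precisely to \eqref{eq:theor1}; hence the base case is Theorem \ref{theor:1}. For the inductive step, I would start from the last term in \eqref{eq:theorn} at level $n$, namely $\frac{1}{(n-1)!}E[g^{(n)}(X^L)(1-X^L)^{n-1}]E[X]$, and rewrite it via the first equality in \eqref{eq:medhXL} (with $h(u)=g^{(n)}(u)(1-u)^{n-1}$) as $\frac{1}{(n-1)!}\int_0^1 g^{(n)}(u)(1-u)^{n-1}Q(u)\,{\rm d}u$. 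The goal is to peel off one more term of the sum and land on the analogous integral with $g^{(n+1)}$.

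The key computation is an integration by parts on $\int_0^1 g^{(n)}(u)(1-u)^{n-1}Q(u)\,{\rm d}u$. I would integrate the factor $g^{(n)}(u)(1-u)^{n-1}$ — whose antiderivative, chosen so as to vanish at $u=1$, is $-\frac{1}{n}\big[g^{(n-1)}(u)(1-u)^{n-1} + (n-1)\!\int \cdots\big]$; more cleanly, note that $\frac{\rm d}{{\rm d}u}\big[g^{(n-1)}(u)(1-u)^{n-1}\big] = g^{(n)}(u)(1-u)^{n-1} - (n-1)g^{(n-1)}(u)(1-u)^{n-2}$, so $g^{(n)}(u)(1-u)^{n-1}$ is the derivative of $g^{(n-1)}(u)(1-u)^{n-1}$ plus $(n-1)g^{(n-1)}(u)(1-u)^{n-2}$. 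Differentiating $Q$ contributes $q(u)=Q'(u)$. Using $Q(0)=0$ to kill the boundary term at $0$ and the $(1-u)^{n-1}$ factor to kill it at $1$, the parts formula turns $\int_0^1 g^{(n)}(u)(1-u)^{n-1}Q(u)\,{\rm d}u$ into $-\int_0^1 g^{(n-1)}(u)(1-u)^{n-1}q(u)\,{\rm d}u$ plus a correction $(n-1)\int_0^1 g^{(n-1)}(u)(1-u)^{n-2}Q(u)\,{\rm d}u$; iterating the bookkeeping (or choosing the right combination the first time) produces exactly the term $\frac{1}{(n-1)!}E[g^{(n-1)}(U)(1-U)^{n-1}q(U)]$ to be added to the sum, together with $\frac{1}{n!}\int_0^1 g^{(n+1)}(u)(1-u)^{n}Q(u)\,{\rm d}u$, which by \eqref{eq:medhXL} equals $\frac{1}{n!}E[g^{(n+1)}(X^L)(1-X^L)^{n}]E[X]$. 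Substituting back into the level-$n$ identity yields the level-$(n+1)$ identity.

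The main obstacle I anticipate is purely clerical rather than conceptual: getting the combinatorial factors ($1/k!$, the binomial-type weights produced by repeatedly differentiating $(1-u)^{k}$) to line up exactly, and verifying that every boundary term genuinely vanishes — at $u=1$ this needs the power of $(1-u)$ to be at least one, which holds since the exponents range over $n-1,n-2,\dots,1$, and at $u=0$ it needs $Q(0)=0$, which is built into the definition of $\mathcal D$. The integrability hypothesis "$g^{(n)}\cdot Q$ integrable on $(0,1)$" guarantees all the integrals in sight are finite and legitimizes the applications of \eqref{eq:medhXL}; one should also note that integrability of $g^{(n)}\cdot Q$ together with differentiability forces $g^{(k)}\cdot Q$ to be integrable for the lower-order terms appearing in the sum, so every expectation written down is well defined. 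Once the parts identity is verified for a single step, the induction closes immediately.
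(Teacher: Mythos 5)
Your argument is correct, but it takes a genuinely different route from the paper's. The paper does not induct: it introduces the Taylor remainder $R_ng(u)=g(1)-\sum_{k=0}^{n-1}\frac{g^{(k)}(u)}{k!}(1-u)^k$, observes the telescoping identity $\frac{\partial}{\partial u}R_ng(u)=-\frac{g^{(n)}(u)}{(n-1)!}(1-u)^{n-1}$, and applies Theorem \ref{theor:1} once to $g^{*}=R_ng$ (using $R_ng(1)=0$); expanding $E[R_ng(U)\,q(U)]$ via the definition of $R_ng$ then yields (\ref{eq:theorn}) in one stroke. Your induction replaces that single application by repeated integration by parts: the step from level $n$ to level $n+1$ is exactly the integrated product rule $\int_0^1\frac{{\rm d}}{{\rm d}u}\bigl[g^{(n)}(u)(1-u)^{n}Q(u)\bigr]\,{\rm d}u=0$, which rearranges to $\frac{1}{(n-1)!}\int_0^1 g^{(n)}(1-u)^{n-1}Q\,{\rm d}u=\frac{1}{n!}\int_0^1 g^{(n)}(1-u)^{n}q\,{\rm d}u+\frac{1}{n!}\int_0^1 g^{(n+1)}(1-u)^{n}Q\,{\rm d}u$; the factorials line up (since $n\cdot\frac{1}{n!}=\frac{1}{(n-1)!}$), so the ``clerical'' worry you raise is unfounded and no iterated bookkeeping is actually needed --- one product-rule identity per step suffices. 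What the paper's route buys is that all the combinatorics are absorbed into the telescoping derivative of $R_ng$, and the theorem is exposed as literally Theorem \ref{theor:1} applied to the Taylor remainder; what your route buys is an elementary, self-contained verification that uses Theorem \ref{theor:1} only as the base case. Two points in your sketch deserve tightening: the boundary term at $u=1$ does not vanish merely because of the power of $(1-u)$, since $Q$ may be unbounded there --- one should note that $(1-u)Q(u)\le\int_u^1 Q(t)\,{\rm d}t\to 0$ by monotonicity of $Q$ and $E[X]<\infty$; and your claim that integrability of $g^{(n)}\cdot Q$ ``forces'' that of $g^{(k)}\cdot Q$ for $k<n$ is true but not automatic (it requires writing $g^{(k)}$ as an antiderivative of $g^{(k+1)}$ and exploiting the monotonicity and integrability of $Q$ near the endpoints), though the paper is equally silent on this point.
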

\begin{proof}
For $n=1,2,\ldots$, consider the function
\begin{equation}
 R_ng(u):=g(1)-\sum_{k=0}^{n-1} \frac{g^{(k)}(u)}{k!}\,(1-u)^k, \qquad 0<u<1.
 \label{eq:defRngxu}
\end{equation}
It has the following properties, for fixed $u\in(0,1)$:
$$ 
 R_1g(u)=g(1)-g(u),
$$
\begin{equation}
 \frac{\partial}{\partial u}R_ng(u)=- \frac{g^{(n)}(u)}{(n-1)!}\,(1-u)^{n-1}, 
 \qquad n\geq 1.
 \label{eq:propderivR}
\end{equation}
Applying Theorem \ref{theor:1} to the function $g^*(u):=R_ng(u)$ we have 
$$
 E[\{R_ng(U)-R_ng(1)\}\,q(U)] =-E\left[\frac{\partial}{\partial u}R_ng(u)\big|_{u=X^L}\right]E[X]. 
$$ 
Hence, noting that $R_ng(1)=0$ and making use of Eq.\ (\ref{eq:propderivR}) we obtain
$$
  E[R_ng(U)\,q(U)]  = \frac{1}{(n-1)!}\,E\left[g^{(n)}(X^L)(1-X^L)^{n-1}\right]\,E[X]. 
$$
Finally, substituting (\ref{eq:defRngxu}) in the left-hand-side and rearranging the terms, 
Eq.\  (\ref{eq:theorn}) follows. 
\end{proof}
\par
We note that Eq.\ (\ref{eq:theorn}) reduces to (\ref{eq:theor1}) when $n=1$. 
\begin{corollary}\label{coroll:1}
Under the assumptions of Theorem \ref{theor:2}, for $\alpha>0$ we have 
\begin{equation}
\begin{split}
 E[(1-U^{\alpha})\,q(U)]
 & =\sum_{k=1}^{n-1} {\alpha \choose k}E\left[U^{\alpha-k}(1-U)^kq(U)\right] \\
 & +n {\alpha \choose n}E\left[(X^L)^{\alpha-n}(1-X^L)^{n-1}\right]E[X],
 \end{split}
 \nonumber
\end{equation}
where ${\alpha \choose k}=\alpha(\alpha-1)(\alpha-2)\cdots (\alpha-k+1)/k!$ is the generalized 
binomial coefficient. 
\end{corollary}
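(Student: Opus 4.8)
The plan is to obtain the corollary as a direct specialization of Theorem~\ref{theor:2}, applied to the power function $g(u)=u^{\alpha}$ on $(0,1)$, for $\alpha>0$. First I would record the derivatives: since $g^{(k)}(u)=\alpha(\alpha-1)\cdots(\alpha-k+1)\,u^{\alpha-k}=k!\,{\alpha\choose k}\,u^{\alpha-k}$ for every $k\geq 0$, we have in particular $g(1)=1$, so the left-hand side $E[\{g(1)-g(U)\}\,q(U)]$ of \eqref{eq:theorn} becomes $E[(1-U^{\alpha})\,q(U)]$, matching the left-hand side of the corollary. The integrability hypothesis of Theorem~\ref{theor:2}, namely that $g^{(n)}\cdot Q$ be integrable on $(0,1)$, is inherited from the standing assumptions of the corollary; near $u=1$ the factor $(1-u)^{n-1}$ in the remainder term will in fact improve integrability, and near $u=0$ the power $u^{\alpha-n}$ is integrable against $Q$ whenever the relevant expectations in the statement are finite, so there is nothing delicate to check beyond what is already assumed.

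Next I would substitute these derivative formulas into the two terms on the right-hand side of \eqref{eq:theorn}. In the sum $\sum_{k=1}^{n-1}\frac{1}{k!}E[g^{(k)}(U)(1-U)^k q(U)]$, the prefactor $\frac{1}{k!}$ cancels the $k!$ coming from $g^{(k)}(u)=k!\,{\alpha\choose k}\,u^{\alpha-k}$, leaving exactly $\sum_{k=1}^{n-1}{\alpha\choose k}E[U^{\alpha-k}(1-U)^k q(U)]$. For the remainder term, $\frac{1}{(n-1)!}g^{(n)}(u)=\frac{n!}{(n-1)!}{\alpha\choose n}u^{\alpha-n}=n{\alpha\choose n}u^{\alpha-n}$, so $\frac{1}{(n-1)!}E[g^{(n)}(X^L)(1-X^L)^{n-1}]E[X]=n{\alpha\choose n}E[(X^L)^{\alpha-n}(1-X^L)^{n-1}]E[X]$, which is precisely the last term of the corollary. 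Collecting these identities yields the displayed equation, completing the argument.

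There is essentially no hard step here: the whole content is the bookkeeping identity $\frac{d^k}{du^k}u^{\alpha}=k!\,{\alpha\choose k}\,u^{\alpha-k}$ together with a clean cancellation of factorials. The only point deserving a word of care is the interpretation of the generalized binomial coefficients when $\alpha$ is not an integer (they are nonzero in general) and the behaviour of the integrands at the endpoint $u=0$ when $\alpha-n<0$; I would simply remark that finiteness of the expectations appearing in the statement is exactly what is needed, and is guaranteed under the hypotheses of Theorem~\ref{theor:2} applied to $g(u)=u^{\alpha}$. Thus the proof is a two-line verification once the derivative formula is in hand.
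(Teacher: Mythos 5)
Your proposal is correct and follows exactly the paper's own route: the paper proves the corollary by applying Theorem~\ref{theor:2} with $g(u)=u^{\alpha}$, which is precisely what you do, with the factorial bookkeeping $g^{(k)}(u)=k!\,{\alpha\choose k}u^{\alpha-k}$ spelled out in more detail than the paper bothers to give.
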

\begin{proof}
The proof follows from Theorem \ref{theor:2} by setting $g(u)=u^{\alpha}$, $0<u<1$. 
\end{proof}
\par
For instance, making use of  Corollary \ref{coroll:1}, when $\alpha=1$ we have 
$$
 E[(1-U)\,q(U)]=E[X]=E[Q(U)],
$$
whereas when $\alpha=2$ the following indentities follow from Eq.\ (\ref{eq:EXLk}):
$$
 E[(1-U^2)\,q(U)]=2 E[X^L]\,E[X]=2 E[U\,Q(U)],
$$
$$
 E[(1-U)^2\,q(U)]=2 E[1-X^L]\,E[X]=2 E[(1-U)\,Q(U)].
$$
%----------------------------------------------------------------------------
\section{An analogous of the mean value theorem}\label{section:mvt}
%----------------------------------------------------------------------------
A suitable transformation investigated in Section 3 of Di Crescenzo \cite{DiCr99} allows to construct 
new probability densities  via differences of (complementary) distribution functions of two 
stochastically ordered random variables. We now aim to construct similarly  new 
densities from differences of quantile functions, thus extending (\ref{eq:defLp}) to a more general case. 
This allows us to obtain a quantile-based version of the probabilistic mean value theorem, 
given in Theorem \ref{theor:3} below. 
\par
Let us recall some useful definitions of stochastic orders  (see, for instance, 
Shaked and Shanthikumar \cite{ShSh07}). To this purpose, we denote by $Q_X(u)$ and $Q_Y(u)$ the 
quantile functions of two random variables $X$ and $Y$, defined as in Eq.\ (\ref{eq:defQu}). 
We say that $X$ is smaller than $Y$ 
\\
$\bullet$ \ in the usual stochastic order (denoted by $X \leq_{st} Y$) if $E[\phi(X)] \leq  E[\phi(Y)]$ 
for all non-decreasing functions $\phi$ for which the expectations exist, or equivalently if 
$\barF_X(t)\leq \barF_Y(t)$ for all $t\in\mathbb{R}$, i.e.\ $Q_X(u)\leq Q_Y(u)$ for all $0<u<1$; 
 \\
$\bullet$ \ in the hazard rate order (denoted by $X \leq_{hr} Y$) if
$\barF_X(t)/\barF_Y(t)$ is decreasing in $t$; 
\\
$\bullet$ \ in the reversed hazard rate order (denoted by $X \leq_{rh} Y$) if
$F_X(t)/F_Y(t)$ is decreasing in $t$; 
\\
$\bullet$ \ in the likelihood ratio order (denoted by $X \leq_{lr} Y$) if
$f_X(t)/f_Y(t)$ is decreasing in $t$;
\\
$\bullet$ \ in the star order (denoted by $X \leq_{*} Y$) if $Q_X(u)/Q_Y(u)$ is decreasing in $u\in(0,1)$ 
(see Section 4.B of Shaked and Shanthikumar \cite{ShSh07}); 
\\
$\bullet$ \ in the expected proportional shortfall order (denoted by $X \leq_{PS} Y$) if 
$\int_u^1 Q_X(p)\,{\rm d}p/\int_u^1 Q_Y(p)\,{\rm d}p$ is decreasing in $u\in(0,1)$ (see 
Belzunce {\em et al.}\ \cite{BePiRuSo12} for some equivalent conditions for this order).
\par
The following result is analogous to Proposition 3.1 of Di Crescenzo \cite{DiCr99}. 
\begin{proposition}\label{prop:1}
Let $X$ and $Y$ be random variables taking values in $\mathbb{R}$, and such that 
$-\infty<E[X]<E[Y]<+\infty$. Then
\begin{equation}
 f_{Z^L}(u)=\frac{Q_Y(u)-Q_X(u)}{E[Y]-E[X]}, 
 \qquad 0< u<1
 \label{eq:deffZLu}
\end{equation}
is the probability density function of an absolutely continuous random variable $Z^L$ taking values 
in $(0,1)$ if, and only if, $X\leq_{st}Y$. 
\end{proposition}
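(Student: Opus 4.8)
The plan is to show that formula~(\ref{eq:deffZLu}) defines a bona fide probability density on $(0,1)$ precisely when $X\leq_{st}Y$, and this breaks naturally into two directions. The unifying observation is that the total mass of the purported density is automatically correct: since $E[W]=\int_0^1 Q_W(u)\,{\rm d}u$ for any integrable random variable $W$ (the standard quantile representation of the mean), we get
\begin{equation*}
 \int_0^1 f_{Z^L}(u)\,{\rm d}u=\frac{\int_0^1 Q_Y(u)\,{\rm d}u-\int_0^1 Q_X(u)\,{\rm d}u}{E[Y]-E[X]}=\frac{E[Y]-E[X]}{E[Y]-E[X]}=1.
\end{equation*}
So the only thing that can fail is nonnegativity of $f_{Z^L}$, and the entire proposition reduces to the claim that $f_{Z^L}(u)\geq 0$ for (almost) all $u\in(0,1)$ if and only if $X\leq_{st}Y$.

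For the ``if'' direction, I would invoke the characterization of the usual stochastic order already recalled in the excerpt: $X\leq_{st}Y$ is equivalent to $Q_X(u)\leq Q_Y(u)$ for all $0<u<1$. Under the standing hypothesis $E[X]<E[Y]$, the denominator $E[Y]-E[X]$ is strictly positive, so $f_{Z^L}(u)=\bigl(Q_Y(u)-Q_X(u)\bigr)/\bigl(E[Y]-E[X]\bigr)\geq 0$ for every $u\in(0,1)$. Together with the mass-one computation above, this shows $f_{Z^L}$ is a genuine density; absolute continuity of the associated $Z^L$ is immediate since its distribution function $\int_0^u f_{Z^L}(s)\,{\rm d}s$ is, by construction, absolutely continuous on $(0,1)$.

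For the ``only if'' direction I would argue by contraposition. Suppose $X\not\leq_{st}Y$; then again by the quantile characterization there is some $u_0\in(0,1)$ with $Q_X(u_0)>Q_Y(u_0)$. Since quantile functions are right-continuous (or, here, the random variables are absolutely continuous so $Q_X,Q_Y$ are continuous), the strict inequality $Q_Y<Q_X$ persists on a nondegenerate subinterval of $(0,1)$, on which $f_{Z^L}$ is strictly negative — hence $f_{Z^L}$ cannot be a probability density. This completes the equivalence. The main (and only mildly delicate) point to handle carefully is the regularity needed to pass from ``$Q_Y(u_0)<Q_X(u_0)$ at a single point'' to ``$Q_Y<Q_X$ on a set of positive Lebesgue measure'', so that negativity is not confined to a null set; invoking the absolute continuity assumed for $X$ and $Y$ (which forces their quantile functions to be continuous) is the cleanest way to secure this, and one should also note explicitly that integrability of $Q_Y-Q_X$ — needed for the mass computation — follows from $E[X]$ and $E[Y]$ being finite.
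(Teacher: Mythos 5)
Your proof is correct and follows essentially the same route as the paper, which simply observes that the claim ``immediately follows from the definition of the usual stochastic order'' (via the quantile characterization $X\leq_{st}Y \Leftrightarrow Q_X(u)\leq Q_Y(u)$ for all $u$, with normalization coming from the quantile representation of the mean); you merely spell out the details the authors leave implicit. One tiny slip worth noting: with the definition $Q(u)=\inf\{x:F(x)\geq u\}$ the quantile function is left-continuous rather than right-continuous, but left-continuity (together with monotonicity) is already enough to propagate a pointwise violation $Q_X(u_0)>Q_Y(u_0)$ to a nondegenerate interval, so your ``only if'' argument stands.
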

\begin{proof}
The proof immediately follows from the definition of the usual stochastic order. 
\end{proof}
\par
We remark that, due to $(\ref{eq:densXeXL})$, the densities $f_{X_e}$ and $f_{X^L}$ are 
respectively  monotonic decreasing and increasing, whereas density (\ref{eq:deffZLu}) is not 
necessarily monotonic. We also note that $f_{Z^L}$ can be expressed as a linear combination 
of two densities. Indeed, under the assumptions of Proposition \ref{prop:1}, for $0< u<1$ we have
\begin{equation}
 f_{Z^L}(u)=c\, f_{Y^L}(u)+(1-c)\,f_{X^L}(u), 
 \qquad 
 c=\frac{E[Y]}{E[Y]-E[X]}. 
 \label{eq:comb}
\end{equation}
Note that $c$ can be negative, in particular $c<1$ if and only if $E[X]<0$, and  
$c>1$ if and only if $E[X]>0$. 
\begin{remark}\rm 
Let 
$$
 L_{X,Y}(p)= \frac{1}{E[Y]-E[X]}\int_0^p[Q_Y(u)-Q_X(u)]{\rm d}u,\qquad 0\leq p\leq 1 
$$
be the distribution function corresponding to density (\ref{eq:deffZLu}). This is a suitable extension of the Lorenz 
curve (\ref{eq:defLp}). Indeed, assume that the individuals of a certain population share a common good 
such as wealth, distributed according to $Y$. Suppose that the income received by the individuals is subject 
to losses due to various reasons (e.g.\ taxes, faults, damages, etc.), distributed according to $X$. 
Hence, $Q_Y(u)-Q_X(u)$ is the {\em net\/} income received by the poorest fraction $u$ of the population, 
and thus $L_{X,Y}(p)$ gives the portion of the {\em net\/} wealth held by a portion $p$ of the population. 
Note that $Q_Y(u)-Q_X(u)$ is not necessarily increasing, and thus generally it is not a quantile function. 
\end{remark}
\par
Let us now introduce the operator $\Psi^L$ on the set of all pairs of random variables $X$ and $Y$ 
defined as in Proposition \ref{prop:1}, such that $\Psi^L(X,Y)$ denotes an absolutely continuous 
random variable having density (\ref{eq:deffZLu}). Hence, the random variable $X^L=\Psi^L(0,X)$ 
has distribution function (\ref{eq:defLp}), and the distribution of $Y^L=\Psi^L(0,Y)$ is similarly defined. 
\begin{example}\label{ex:1}\rm
(i) Let $X$ and $Y$ be exponentially distributed with parameters $\lambda_X$ and $\lambda_Y$, 
respectively, with $\lambda_X>\lambda_Y$. 
Then,  $\Psi^L(X,Y)$ is exponentially distributed with parameter 1. \\ 
(ii) Let $X$ and $Y$ be uniformly distributed in $(0,\alpha_X)$ and $(0,\alpha_Y)$, respectively, 
with $\alpha_X<\alpha_Y$. 
Then,  $\Psi^L(X,Y)$ is uniformly distributed in $(0,1)$.
\end{example}
\par
Aiming to focus on some stochastic comparisons, we now introduce a new stochastic 
order based on the quantile function, which is dual to the expected proportional shortfall order. 
\begin{definition}
We say that $X$ is smaller than $Y$ in the {\em expected reversed proportional shortfall order} 
(denoted by $X \leq_{RPS} Y$) if $\int_0^u Q_X(p)\,{\rm d}p/\int_0^u Q_Y(p)\,{\rm d}p$ 
is decreasing in $u\in(0,1)$. 
\end{definition}
Results and properties of such an order go beyond the scope of this article, and thus will be 
the object of future investigation. The proof of the following results follows from the definitions 
of the involved notions and some straightforward calculations, and thus is omitted.  
\begin{proposition} 
Under the assumptions of Proposition \ref{prop:1}, for $X^L=\Psi^L(0,X)$,  $Y^L=\Psi^L(0,Y)$ 
and $Z^L=\Psi^L(X,Y)$ we have: 
\\ 
(i) If $X \leq_{*} Y$, then $X^L \leq_{lr} Z^L$ and $Y^L \leq_{lr} Z^L$. 
\\
(ii) If $X \leq_{PS} Y$, then $X^L \leq_{hr} Z^L$ and $Y^L \leq_{hr} Z^L$.
\\
(iii) If $X \leq_{RPS} Y$, then $X^L \leq_{rh} Z^L$ and $Y^L \leq_{rh} Z^L$.   
\\
(iv) The following conditions are equivalent:\\
$\bullet$ \  $X^L\leq_{st}Y^L$, \\
$\bullet$ \  $X^L\leq_{st}Z^L$, \\
$\bullet$ \  $Y^L\leq_{st}Z^L$.  
\end{proposition}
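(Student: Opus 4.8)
The plan is to verify each of the four parts by translating the defining inequality of the relevant stochastic order into an analytic statement about the densities in \eqref{eq:deffZLu} and \eqref{eq:densXeXL}, and then exploiting the linear-combination representation \eqref{eq:comb}. Recall that $f_{X^L}(u)=Q_X(u)/E[X]$, $f_{Y^L}(u)=Q_Y(u)/E[Y]$, and $f_{Z^L}(u)=[Q_Y(u)-Q_X(u)]/(E[Y]-E[X])$ on $(0,1)$. I would first record the three "ratio" identities that drive everything: on $(0,1)$,
\begin{equation*}
 \frac{f_{X^L}(u)}{f_{Z^L}(u)} = \frac{E[Y]-E[X]}{E[X]}\cdot\frac{Q_X(u)}{Q_Y(u)-Q_X(u)},
 \qquad
 \frac{f_{Y^L}(u)}{f_{Z^L}(u)} = \frac{E[Y]-E[X]}{E[Y]}\cdot\frac{Q_Y(u)}{Q_Y(u)-Q_X(u)},
\end{equation*}
and each of these, up to the positive constant, is a monotone function of $Q_X(u)/Q_Y(u)$. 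This is the key observation: since $t\mapsto t/(1-t)$ and $t\mapsto 1/(1-t)$ are increasing on their domains, the likelihood ratios $f_{X^L}/f_{Z^L}$ and $f_{Y^L}/f_{Z^L}$ are decreasing in $u$ exactly when $Q_X(u)/Q_Y(u)$ is decreasing in $u$, i.e.\ exactly when $X\leq_*Y$. That gives part (i) directly from the definition of the likelihood ratio order, once one checks the densities are positive on the common support $(0,1)$ (which follows from $X\leq_{st}Y$ together with $E[X]<E[Y]$).

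For part (ii) I would integrate: $\overline{F}_{Z^L}(u)=\int_u^1 f_{Z^L}(p)\,{\rm d}p = [\int_u^1 Q_Y(p)\,{\rm d}p - \int_u^1 Q_X(p)\,{\rm d}p]/(E[Y]-E[X])$, and similarly $\overline{F}_{X^L}(u)$ and $\overline{F}_{Y^L}(u)$ are the normalized integrals $\int_u^1 Q_X$ and $\int_u^1 Q_Y$. Then
\begin{equation*}
 \frac{\overline{F}_{X^L}(u)}{\overline{F}_{Z^L}(u)} = \frac{E[Y]-E[X]}{E[X]}\cdot\frac{\int_u^1 Q_X(p)\,{\rm d}p}{\int_u^1 [Q_Y(p)-Q_X(p)]\,{\rm d}p},
\end{equation*}
and writing $r(u)=\int_u^1 Q_X(p)\,{\rm d}p \big/ \int_u^1 Q_Y(p)\,{\rm d}p$ this ratio equals $\tfrac{E[Y]-E[X]}{E[X]}\cdot r(u)/(1-r(u))$, which is decreasing in $u$ precisely when $r(u)$ is decreasing, i.e.\ when $X\leq_{PS}Y$; the computation for $\overline{F}_{Y^L}/\overline{F}_{Z^L}$ is identical with $1/(1-r(u))$. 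Hence $X^L\leq_{hr}Z^L$ and $Y^L\leq_{hr}Z^L$. Part (iii) is the mirror image: using $F_{Z^L}(u)=\int_0^u f_{Z^L}$ and the ratio $s(u)=\int_0^u Q_X(p)\,{\rm d}p\big/\int_0^u Q_Y(p)\,{\rm d}p$, the same algebra shows $F_{X^L}/F_{Z^L}$ and $F_{Y^L}/F_{Z^L}$ are decreasing in $u$ iff $s(u)$ is decreasing, which is exactly $X\leq_{RPS}Y$, giving the reversed hazard rate comparisons.

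For part (iv) I would argue directly from \eqref{eq:comb}. Since $c=E[Y]/(E[Y]-E[X])$ and $1-c=-E[X]/(E[Y]-E[X])$, the sign pattern of $c$ splits into cases ($E[X]>0$, $E[X]=0$, $E[X]<0$), but in every case $f_{Z^L}-f_{X^L}=c\,(f_{Y^L}-f_{X^L})$ with $c>0$ whenever $E[X]>0$... more robustly, observe that $\overline{F}_{Z^L}(u)-\overline{F}_{X^L}(u)$, $\overline{F}_{Y^L}(u)-\overline{F}_{X^L}(u)$, and $\overline{F}_{Y^L}(u)-\overline{F}_{Z^L}(u)$ are, after multiplying by the common positive normalizers, all positive scalar multiples of $\int_u^1[Q_Y(p)-Q_X(p)]\,{\rm d}p\geq 0$; so each of the three is nonnegative for all $u$, and moreover each is zero for all $u$ iff $Q_X\equiv Q_Y$ a.e., which is impossible since $E[X]<E[Y]$. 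Thus $X^L\leq_{st}Y^L$, $X^L\leq_{st}Z^L$, $Y^L\leq_{st}Z^L$ all hold unconditionally and are therefore trivially equivalent. The main obstacle I anticipate is nothing deep — it is purely bookkeeping about signs of the normalizing constants and confirming that the three densities share the full support $(0,1)$ so that "ratio is decreasing" is the honest definition of the orders involved; once the support issue is dispatched via $X\leq_{st}Y$ and strict mean inequality, parts (i)--(iii) are one-line reductions to the monotonicity of $t\mapsto t/(1-t)$, and (iv) is immediate.
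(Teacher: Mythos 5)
Your arguments for parts (i)--(iii) are correct and amount to the ``straightforward calculations'' the paper alludes to (the paper in fact omits the proof entirely): writing each ratio of densities, survival functions, or distribution functions as a fixed positive constant times $t/(1-t)$ or $1/(1-t)$ evaluated at $Q_X(u)/Q_Y(u)$, $\int_u^1 Q_X/\int_u^1 Q_Y$, or $\int_0^u Q_X/\int_0^u Q_Y$ respectively, and invoking the monotonicity of $t\mapsto t/(1-t)$ on $[0,1)$, is exactly the right reduction.

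Part (iv), however, is wrong as written. The differences $\overline F_{Z^L}(u)-\overline F_{X^L}(u)$, etc., are \emph{not} positive multiples of $\int_u^1[Q_Y(p)-Q_X(p)]\,{\rm d}p$: for instance
\[
E[X]\,(E[Y]-E[X])\bigl[\overline F_{Z^L}(u)-\overline F_{X^L}(u)\bigr]
= E[X]\int_u^1 Q_Y(p)\,{\rm d}p \;-\; E[Y]\int_u^1 Q_X(p)\,{\rm d}p ,
\]
which has no definite sign under the standing hypotheses. Concretely, the three $\leq_{st}$ relations do \emph{not} hold unconditionally: take $Y=X+c$ with $c>0$, so that $X\leq_{st}Y$ and $E[X]<E[Y]$; then $F_{X^L}(p)\geq F_{Y^L}(p)$ is equivalent to $\frac1p\int_0^p Q_X(t)\,{\rm d}t\geq \int_0^1 Q_X(t)\,{\rm d}t$, which fails because $Q_X$ is increasing --- in fact $Y^L\leq_{st}X^L$ in that example, strictly. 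The correct route for (iv) is to observe that each of the three conditions, after cross-multiplying by the positive constants $E[X]$, $E[Y]$ and $E[Y]-E[X]$, reduces to one and the same inequality, namely $E[Y]\int_0^u Q_X(p)\,{\rm d}p\geq E[X]\int_0^u Q_Y(p)\,{\rm d}p$ for all $u\in(0,1)$ (equivalently $L_X(u)\geq L_Y(u)$, the Lorenz order); hence the three statements are equivalent without any of them being automatic. Your final sentence about strict positivity is in any case not needed, since all the orders here are non-strict.
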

\par
According to (\ref{eq:defdensqu}), hereafter $q_X$ and $q_Y$ denote respectively the quantile density 
functions of $X$ and $Y$. The next result can be viewed as a quantile-based analogue of the probabilistic 
mean value theorem given in Theorem 4.1 of Di Crescenzo \cite{DiCr99}. 
\begin{theorem}\label{theor:3}
Let $X,Y\in {\cal D}$ and such that $X\leq_{st}Y$. 
Moreover, let $g:(0,1)\to \mathbb{R}$ be a differentiable function, and let 
$g'\cdot Q_X$ and $g'\cdot Q_Y$ be integrable on $(0,1)$. Then, for $Z^L=\Psi^L(X,Y)$ we 
have that $E\left[g'(Z^L)\right]$ is finite, and 
\begin{equation}
 E[\{g(1)-g(U)\}\,\{q_Y(U)-q_X(U)\}]=E\left[g'(Z^L)\right]\{E[Y]-E[X]\},
 \label{eq:theor3}
\end{equation}
where $U$ is uniformly distributed in $[0,1]$. 
\end{theorem}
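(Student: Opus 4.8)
The plan is to mimic the proof of Theorem~\ref{theor:1}, applying the representation of $Z^L$ from Proposition~\ref{prop:1} in place of $X^L$, and exploiting the linearity in $Q$ of the computation there. First I would start from the left-hand side and write $g(1)-g(u)=\int_0^1 g'(v)\,{\bf 1}_{\{u\le v\}}\,{\rm d}v$ by the (deterministic) mean value theorem, using $g$ differentiable on $(0,1)$. Substituting this into $E[\{g(1)-g(U)\}\{q_Y(U)-q_X(U)\}]$ and interchanging the expectation over $U$ (uniform on $[0,1]$) with the integral in $v$ — justified by the assumed integrability of $g'\cdot Q_X$ and $g'\cdot Q_Y$, via Fubini/Tonelli after splitting into the $X$ and $Y$ parts — gives $\int_0^1 g'(v)\,E[(q_Y(U)-q_X(U))\,{\bf 1}_{\{U\le v\}}]\,{\rm d}v$. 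Now for either random variable, $E[q_X(U)\,{\bf 1}_{\{U\le v\}}]=\int_0^v q_X(u)\,{\rm d}u=Q_X(v)-Q_X(0)=Q_X(v)$, since $X\in{\cal D}$ forces $Q_X(0)=0$, and likewise for $Y$. Hence the expression collapses to $\int_0^1 g'(v)\,\{Q_Y(v)-Q_X(v)\}\,{\rm d}v$.

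It then remains to identify this integral with the right-hand side. By Proposition~\ref{prop:1}, $Q_Y(v)-Q_X(v)=(E[Y]-E[X])\,f_{Z^L}(v)$ for $0<v<1$, so
\begin{equation*}
 \int_0^1 g'(v)\,\{Q_Y(v)-Q_X(v)\}\,{\rm d}v
 =\{E[Y]-E[X]\}\int_0^1 g'(v)\,f_{Z^L}(v)\,{\rm d}v
 =\{E[Y]-E[X]\}\,E[g'(Z^L)],
\end{equation*}
which is exactly (\ref{eq:theor3}). The finiteness of $E[g'(Z^L)]$ is obtained along the way: using (\ref{eq:comb}) one has $|g'(v)|\,f_{Z^L}(v)\le |c|\,|g'(v)|\,f_{Y^L}(v)+|1-c|\,|g'(v)|\,f_{X^L}(v)=\frac{|c|}{E[Y]}|g'(v)|Q_Y(v)+\frac{|1-c|}{E[X]}|g'(v)|Q_X(v)$ (with the obvious adjustment when $E[X]=0$, in which case $c=1$ and only the $Y$ term survives), and both right-hand terms are integrable on $(0,1)$ by hypothesis; so $E[g'(Z^L)]$ is absolutely convergent, and all the interchanges above are legitimate.

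The main obstacle — such as it is — is purely the bookkeeping of integrability: one must make sure the double integral $\int_0^1\!\int_0^1 |g'(v)|\,|q_Y(u)-q_X(u)|\,{\bf 1}_{\{u\le v\}}\,{\rm d}u\,{\rm d}v$ is finite before swapping the order of integration, and that the telescoping $\int_0^v q_X(u)\,{\rm d}u=Q_X(v)$ is valid, i.e.\ that $Q_X$ is (locally) absolutely continuous with a.e.\ derivative $q_X$, which is part of the definition of the class ${\cal D}$ and of (\ref{eq:defdensqu}). It is cleanest to handle the $q_X$ and $q_Y$ contributions separately, reducing everything to the already-established machinery behind Theorem~\ref{theor:1} applied to $X$ and to $Y$, and then subtract; the subtraction is harmless precisely because $X\leq_{st}Y$ guarantees $Q_Y-Q_X\ge 0$, so no cancellation issues arise in the integrability estimates. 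Aside from that, the argument is a direct transcription of the scalar mean value theorem combined with Proposition~\ref{prop:1}.
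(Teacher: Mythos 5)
Your proof is correct and follows essentially the same route as the paper: the paper simply invokes Theorem~\ref{theor:1} for $X$ and $Y$ separately and then uses the mixture representation (\ref{eq:comb}) of $f_{Z^L}$, whereas you unfold that same computation and identify $\int_0^1 g'(v)\{Q_Y(v)-Q_X(v)\}\,{\rm d}v$ with $\{E[Y]-E[X]\}\,E[g'(Z^L)]$ directly from (\ref{eq:deffZLu}) --- a purely cosmetic difference. Your extra bookkeeping on Fubini and the finiteness of $E[g'(Z^L)]$ goes slightly beyond what the paper records, but is consistent with its argument.
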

\begin{proof}
From Theorem \ref{theor:1} and Eq.\ (\ref{eq:comb}) we obtain 
\[
\begin{split}
 E[\{g(1)-g(U)\}\,\{q_Y(U)-q_X(U)\}] &=  E\left[g'(Y^L)\right] E[Y]-E\left[g'(X^L)\right] E[X] \\
 &= \left\{c \,E\left[g'(Y^L)\right]+(1-c)\,E\left[g'(X^L)\right]\right\} \{E[Y]-E[X]\},
\end{split}
\nonumber 
\]
with $c=E[Y]/(E[Y]-E[X])$. This immediately gives Eq.\ (\ref{eq:theor3}). 
\end{proof}
\par
As example, under the assumptions of Theorem \ref{theor:3}, for $g(u)=u^{\alpha}$, $0< u< 1$,  
we have: 
$$
 E[(1-U^{\alpha})\,\{q_Y(U)-q_X(U)\}]=\alpha\,E\left[(Z^L)^{\alpha-1}\right]\{E[Y]-E[X]\}, 
 \qquad \alpha>0.
$$
In particular, when $\alpha=1$ we get the indentity 
$$
 E[(1-U)\,\{q_Y(U)-q_X(U)\}]=E[Y]-E[X], 
$$
which does not depend on $Z^L$. 
%----------------------------------------------------------------------------
\subsection{Proportional quantile functions}
%----------------------------------------------------------------------------
%Assume that the distributions of $X$ and $Y$ differ only for scale and location parameters, i.e.\ 
%$F_X(t)=h(\frac{t-\mu_X}{s_X})$ and $F_Y(t)=h(\frac{t-\mu_Y}{s_Y})$ for $t\in\mathbb{R}$, 
%with $h$ a suitable increasing function. Then, the quantile functions take the form 
%$Q_X(u)=s_X\, h^{-1}(u)+\mu_X$ and $Q_Y(u)=s_Y \,h^{-1}(u)+\mu_Y$ for $0<u<1$. 
%Hence, if the scale parameters are equal then the assumption $q_X\not\equiv q_Y$ of 
%Theorem \ref{theor:3} cannot be satisfied. We are thus driven to consider the following special case. 
%\par
Let $X,Y\in {\cal D}$ have proportional quantile functions. For instance 
(see Escobar and Meeker \cite{EsMe2006}) such assumption leads 
to a scale-accelerated failure-time model. Let 
\begin{equation}
 Q_X(u)=\alpha_X\,\varphi(u), \qquad 
 Q_Y(u)=\alpha_Y\, \varphi(u), \qquad 
 \forall u\in(0,1),  
 \label{eq:propquant}
\end{equation}
with $\alpha_X>0$, $\alpha_Y>0$, where $\varphi(u)$ is a suitable increasing and differentiable 
function such that $\eta:=\int_0^1 \varphi(u)\,{\rm d}u$ is finite and $\varphi(0)=0$. In other terms, 
$X$ and $Y$ belong to the same scale family of distributions, with 
$$
 F_X(x)=F_Y\Big(\frac{\alpha_Y}{\alpha_X}\,x\Big), \qquad 
 \forall x\in\mathbb{R}. 
$$
\begin{proposition}\label{prop:propor}
The random variables $X^L=\Psi^L(0,X)$ and $Y^L=\Psi^L(0,Y)$ are identically distributed if, 
and only if, $X$ and $Y$ have proportional quantile functions as specified in (\ref{eq:propquant}). 
\end{proposition}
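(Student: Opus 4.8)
The plan is to argue directly in terms of the densities recorded in (\ref{eq:densXeXL}). By definition $X^L=\Psi^L(0,X)$ is absolutely continuous on $(0,1)$ with density $f_{X^L}(u)=Q_X(u)/E[X]$, and likewise $f_{Y^L}(u)=Q_Y(u)/E[Y]$. Two absolutely continuous random variables supported on $(0,1)$ are identically distributed if and only if these densities coincide almost everywhere; and since $X,Y\in{\cal D}$ the quantile functions $Q_X,Q_Y$ are continuous (indeed differentiable), so almost-everywhere equality of $f_{X^L}$ and $f_{Y^L}$ is the same as the pointwise identity
\[
 \frac{Q_X(u)}{E[X]}=\frac{Q_Y(u)}{E[Y]}, \qquad \forall\,u\in(0,1).
\]
Thus the proposition reduces to showing that this identity is equivalent to (\ref{eq:propquant}).

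For the ``if'' implication I would substitute (\ref{eq:propquant}): integrating over $(0,1)$ gives $E[X]=\int_0^1 Q_X(u)\,{\rm d}u=\alpha_X\eta$ and $E[Y]=\alpha_Y\eta$, whence $f_{X^L}(u)=f_{Y^L}(u)=\varphi(u)/\eta$ on $(0,1)$, so that $X^L$ and $Y^L$ have the same distribution.

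For the ``only if'' implication, the displayed identity says $Q_X(u)=c\,Q_Y(u)$ for all $u\in(0,1)$, where $c=E[X]/E[Y]$ is a strictly positive constant because $X$ and $Y$ are nonnegative with finite nonzero means. It then suffices to take $\varphi:=Q_Y$, $\alpha_Y:=1$ and $\alpha_X:=c$: the function $\varphi$ is increasing and differentiable, $\varphi(0)=Q_Y(0)=0$ since $Y\in{\cal D}$, and $\eta=\int_0^1 Q_Y(u)\,{\rm d}u=E[Y]<+\infty$, so (\ref{eq:propquant}) holds. There is essentially no obstacle in this argument; the only point deserving a line of care is the passage from equality in distribution to the pointwise equality of the (normalized) quantile functions, which relies on absolute continuity together with the continuity of $Q_X$ and $Q_Y$ guaranteed by membership in ${\cal D}$.
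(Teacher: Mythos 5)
Your proof is correct and follows essentially the same route as the paper, which simply observes that the distribution function of $X^L$ is (\ref{eq:defLp}) (equivalently, that the density is $Q_X(u)/E[X]$) and reads off the equivalence; you have merely spelled out the details the paper leaves implicit, including the explicit choice $\varphi=Q_Y$, $\alpha_X=E[X]/E[Y]$ in the ``only if'' direction.
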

\begin{proof}
Since the distribution function of $X^L$ is given by (\ref{eq:defLp}), the proof thus follows.  
\end{proof}
\begin{proposition}\label{prop:propor2}
If the quantile functions of $X$ and $Y$ are proportional as expressed in Eq.\ (\ref{eq:propquant}), 
with $\alpha_Y>\alpha_X>0$, then $X\leq_{st}Y$. Moreover, $Z^L=\Psi^L(X,Y)$ is identically distributed 
to $X^L$ and $Y^L$, with density $f_{Z^L}(u)=\varphi(u)/{\eta}$, $0< u<1$, and the following equality holds:
\begin{equation}
 E[\{g(1)-g(U)\}\, \varphi'(U)]=\eta \,E\left[g'(Z^L)\right],
 \label{eq:theorprop}
\end{equation}
where $U$ is uniformly distributed in $(0,1)$, and $g:(0,1)\to \mathbb{R}$ is a differentiable function. 
\end{proposition}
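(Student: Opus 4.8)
The plan is to derive Proposition \ref{prop:propor2} as a straightforward specialization of the general machinery already in place, invoking Proposition \ref{prop:propor}, Proposition \ref{prop:1}, and Theorem \ref{theor:3}. First I would establish the stochastic order: since $\alpha_Y > \alpha_X > 0$ and $\varphi$ is increasing (hence nonnegative on $(0,1)$ because $\varphi(0)=0$), we get $Q_X(u) = \alpha_X\varphi(u) \le \alpha_Y\varphi(u) = Q_Y(u)$ for all $u\in(0,1)$, which by the quantile characterization of $\le_{st}$ recalled in Section \ref{section:mvt} gives $X \le_{st} Y$. In particular $E[X] = \alpha_X\eta < \alpha_Y\eta = E[Y]$, so the hypotheses of Proposition \ref{prop:1} are met and $Z^L = \Psi^L(X,Y)$ is well defined.

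Next I would compute the density of $Z^L$ directly from \eqref{eq:deffZLu}: substituting $Q_X(u)=\alpha_X\varphi(u)$ and $Q_Y(u)=\alpha_Y\varphi(u)$ and using $E[Y]-E[X] = (\alpha_Y-\alpha_X)\eta$, the proportionality constants cancel and $f_{Z^L}(u) = \varphi(u)/\eta$ on $(0,1)$. The same cancellation applied to $f_{X^L}$ and $f_{Y^L}$ via \eqref{eq:densXeXL} (or \eqref{eq:defLp}) shows all three coincide, which is also immediate from Proposition \ref{prop:propor} together with \eqref{eq:comb}. For the final identity \eqref{eq:theorprop}, I would apply Theorem \ref{theor:3} with these specific $X,Y$: the left-hand side of \eqref{eq:theor3} becomes $E[\{g(1)-g(U)\}\,(\alpha_Y-\alpha_X)\varphi'(U)]$ since $q_Y(u)-q_X(u) = (\alpha_Y-\alpha_X)\varphi'(u)$, while the right-hand side is $E[g'(Z^L)](\alpha_Y-\alpha_X)\eta$; dividing through by $\alpha_Y-\alpha_X > 0$ yields \eqref{eq:theorprop}.

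There is essentially no serious obstacle here — the proposition is a clean corollary of Theorem \ref{theor:3} once one observes that every appearance of $\alpha_X,\alpha_Y$ enters through the common factor $\alpha_Y-\alpha_X$, which cancels. The one point requiring a word of care is the integrability hypothesis in Theorem \ref{theor:3}: one should note that $g'\cdot Q_X$ and $g'\cdot Q_Y$ are integrable on $(0,1)$ precisely when $g'\cdot\varphi$ is, i.e. when $E[g'(Z^L)]$ is finite, so the statement \eqref{eq:theorprop} is to be read under that tacit finiteness assumption (consistent with how \eqref{eq:theor3} is stated). Since the excerpt's proof text reads only ``Since the distribution function of $X^L$ is given by \eqref{eq:defLp}, the proof thus follows,'' I expect the authors intend exactly this chain of reductions, so I would keep the written proof to a sentence or two pointing to Proposition \ref{prop:propor2}'s reliance on Proposition \ref{prop:propor}, \eqref{eq:deffZLu}, and Theorem \ref{theor:3}.
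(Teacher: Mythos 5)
Your proposal is correct and follows essentially the same route as the paper, which proves the proposition by combining Proposition \ref{prop:1}, Proposition \ref{prop:propor}, and Theorem \ref{theor:3}; your explicit verification that the factor $\alpha_Y-\alpha_X$ cancels throughout, and your remark on the integrability hypothesis, simply spell out what the paper leaves implicit.
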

\begin{proof}
The proof follows from Propositions \ref{prop:1} and \ref{prop:propor}, and from Theorem \ref{theor:3}. 
\end{proof}
\par 
We remark that the variables considered in Example \ref{ex:1} satisfy the assumptions of 
Proposition \ref{prop:propor2}. Other cases are shown hereafter. 
\begin{example}\label{example:1prop}\rm
(i) Let $X$ and $Y$ have the following distribution functions, with $\beta>0$:
$$
 F_X(x)=\Big(\frac{x}{\alpha_X}\Big)^{\beta}, \qquad 0\leq x\leq \alpha_X,
 \qquad   
 F_Y(x)=\Big(\frac{x}{\alpha_Y}\Big)^{\beta}, \qquad 0\leq x\leq \alpha_Y.
$$ 
If $\alpha_Y>\alpha_X>0$, then the assumptions of Proposition \ref{prop:propor2} are satisfied. 
Hence, Eq.\  (\ref{eq:theorprop}) holds, with $\varphi(u)=u^{1/\beta}$, $0< u< 1$, and 
$\eta=\frac{\beta}{1+\beta}$, so that $Z^L=\Psi^L(X,Y)$ has density 
$f_{Z^L}(u)=\frac{\beta+1}{\beta}u^{1/\beta}$, $0< u<1$.  \\
(ii) Let $X$ and $Y$ have Pareto (Type I) distribution, with  
$$
 F_X(x)=1-\Big(\frac{\alpha_X}{x}\Big)^{\beta}, \qquad x\geq \alpha_X>0,
 \qquad  
 F_Y(x)=1-\Big(\frac{\alpha_Y}{x}\Big)^{\beta}, \qquad x\geq \alpha_Y>0,
$$ 
for $\beta>0$. If $\alpha_Y>\alpha_X>0$ and if $\beta>1$, then the hypotheses of 
Proposition \ref{prop:propor2} hold. Relation (\ref{eq:theorprop}) is thus fulfilled, with 
$\varphi(u)=(1-u)^{-1/\beta}$, $0< u< 1$, and $\eta=\frac{\beta}{\beta-1}$, by which 
$Z^L=\Psi^L(X,Y)$ has density $f_{Z^L}(u)=\frac{\beta-1}{\beta(1-u)^{1/\beta}}$, $0< u< 1$. 
\end{example}
% 
%----------------------------------------------------------------------------
\section{Applications}\label{section:appl}
%----------------------------------------------------------------------------
Let us now analyse various applications of the results given in the previous section. 
%----------------------------------------------------------------------------
\subsection{Classes of distributions}
%----------------------------------------------------------------------------
Among the classes of probability distributions, wide attention is given to the following notions. 
Let $X$ be a nonnegative random variable; then 
\begin{itemize}
\item[(i)] $X$ is NBU (new better that used) $\Leftrightarrow$ $X_t\leq_{st} X$ for all $t\geq 0$, i.e.\ 
$\barF(s)\barF(t) \geq  \barF(s+t)$ for all $s \geq 0$ and $t \geq 0$, 
\item[(ii)] $X$ is IFR (increasing failure rate) $\Leftrightarrow$ $X_t\leq_{st} X_s$ for all $t\geq s\geq 0$, i.e.\ 
$\barF$ is logconcave, 
\end{itemize}
where $X_t:=[X-t\,|\,X>t]$, for $t\geq 0$. The above notions can be expressed also in terms of the 
quantiles. Consider the residual of $X$ evaluated at $Q(p)$, i.e.  
\begin{equation}
 X_{Q(p)}=[X-Q(p)\,|\,X>Q(p)]\qquad \hbox{for $0<p<1$.}
 \label{eq:defXQp}
\end{equation}
In risk theory $X_{Q(p)}$ describes the losses exceeding $Q(p)$. Indeed, in a population of 
losses distributed as $X$, then $X_{Q(p)}$ denotes the residual of a loss whose level is 
equal to the  $p$th quantile, for $0<p<1$. 
If $X$ has a strictly increasing quantile function $Q(p)$, then
\begin{itemize}
\item[(i)] $X$ is NBU $\Leftrightarrow$ $\barF(Q(p)+Q(r))\leq (1-p)(1-r)$ for all $p,r\in(0,1)$,
\item[(ii)] $X$ is IFR $\Leftrightarrow$ $\frac{\barF(Q(s)+Q(p))}{\barF(Q(s)+Q(r))}\leq \frac{1-p}{1-r}$ 
for all $0<p< r<1$ and $0<s<1$.
\end{itemize}
\par
It is worth noting that comparisons of variables defined as in (\ref{eq:defXQp}) allow to define 
the lr-order of the dispersion type (see Belzunce {\em et al.}\ \cite{BeHuKh2003}). 
We recall that, if $X\in{\cal D}$, the {\em conditional value-at-risk} of $X$  is given by
(see Belzunce {\em et al.}\ \cite{BePiRuSo12}, or Denuit {\em et al.}\ \cite{De2005}): 
\begin{equation}
 CVaR[X;p] := E[X_{Q(p)}]=E[X-Q(p)\,|\,X>Q(p)]
 = \frac{1}{1-p}\int_{Q(p)}^{+\infty}\barF(y)\,{\rm d}y, 
 \qquad 0<p<1,
 \label{eq:defCVaR}
\end{equation}
with $CVaR[X;0]=E[X]$. 
(Note that in the literature some authors give different definitions for the conditional value-at-risk.) 
In the context of reliability theory the function given in (\ref{eq:defCVaR}) is also named 
`mean residual quantile function', since  
$$
 CVaR[X;p]={\rm mrl}(Q(p))=\frac{1}{1-p}\int_p^1[Q(t)-Q(p)]\,{\rm d}t,\qquad  0<p<1,
$$ 
where ${\rm mrl}(t)=E[X_t]=E[X-t\,|\,X>t]$ is the {\em mean residual life\/} of a lifetime $X$ evaluated at 
age $t\geq 0$. Furthermore, we point out that the conditional value-at-risk is also related to the 
{\em right spread function\/} of $X$ through the following identity: $CVaR[X;p]=S_X^+(p)/(1-p)$ 
(see, for instance, Fernandez-Ponce {\em et al.\/} \cite{FeKoMu98} for several results on $S_X^+(p)$). 
Finally, the integral in the right-hand-side of (\ref{eq:defCVaR}) is known as the `excess wealth transform', 
and plays an essential role in the excess wealth order (cf.\ Section 3.C of \cite{ShSh07}). 
\begin{remark}\rm
Given two nonnegative random variables $X$ and $Y$, one has 
$\frac{1}{\alpha_X}CVaR[X;p] = \frac{1}{\alpha_Y} CVaR[Y;p]$ for all $p\in(0,1)$ if and only if 
the proportional quantile functions model holds as specified in (\ref{eq:propquant}).  
\end{remark}
\par
Let us now provide a result involving NBU random variables.
\begin{proposition}\label{prop:NBU}
Let $X\in {\cal D}$ be NBU and such that $CVaR[X;p]<E[X]$ for all $p\in(0,1)$. 
If $g:(0,1)\to \mathbb{R}$ is a differentiable function, and if 
$U$ is uniformly distributed in $(0,1)$, then for all $p\in(0,1)$ we have
\begin{equation}
 E[\{g(1)-g(U)\} \{q(U)-q(1-(1-p)(1-U))(1-p)\}]=E[g'(Z^L)] \{E[X]-CVaR[X;p]\},
 \label{eq:relXNBU}
\end{equation}
where $Z^L:=\Psi^L(X_{Q(p)},X)$ has density 
$$
  f_{Z^L}(u)=\frac{Q(u)+Q(p)-Q(1-(1-p)(1-u))}{E[X]-CVaR[X;p]}, 
 \qquad 0\leq u<1.
$$
\end{proposition}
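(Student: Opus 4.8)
The plan is to derive Proposition \ref{prop:NBU} as a direct application of Theorem \ref{theor:3} with the pair $(X_{Q(p)}, X)$ playing the roles of $X$ and $Y$. The first task is to verify the hypotheses of that theorem: since $X$ is NBU we have $X_{Q(p)} = [X - Q(p)\,|\,X > Q(p)] \leq_{st} X$, and the extra assumption $CVaR[X;p] < E[X]$ says precisely that $E[X_{Q(p)}] = CVaR[X;p] < E[X]$, so the strict mean inequality needed in Proposition \ref{prop:1} and Theorem \ref{theor:3} holds. Integrability of $g' \cdot Q_X$ on $(0,1)$ is assumed, and I would note that the quantile function of $X_{Q(p)}$ is an affine reparametrization of $Q$ (see below), so integrability of $g'$ against it follows as well.

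Next I would compute the quantile function of $X_{Q(p)}$. Writing $F$ for the distribution function of $X$, the conditional variable $X - Q(p)$ given $X > Q(p)$ has distribution function $u \mapsto \frac{F(y + Q(p)) - p}{1 - p}$ for $y \geq 0$, whose inverse is obtained by solving $\frac{F(y+Q(p)) - p}{1-p} = u$, giving $y = Q(p + (1-p)u) - Q(p)$. Hence
\[
 Q_{X_{Q(p)}}(u) = Q\big(1 - (1-p)(1-u)\big) - Q(p), \qquad 0 < u < 1,
\]
using $p + (1-p)u = 1 - (1-p)(1-u)$. Differentiating in $u$ gives the quantile density $q_{X_{Q(p)}}(u) = (1-p)\,q\big(1-(1-p)(1-u)\big)$. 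Substituting $Q_Y = Q$, $Q_X = Q_{X_{Q(p)}}$ into formula (\ref{eq:deffZLu}) of Proposition \ref{prop:1}, together with $E[Y] - E[X] = E[X] - CVaR[X;p]$, yields exactly the stated density $f_{Z^L}$, and this also confirms (via Proposition \ref{prop:1}) that $Z^L$ is a genuine random variable on $(0,1)$.

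Finally I would plug these expressions into (\ref{eq:theor3}): the left-hand side becomes $E[\{g(1)-g(U)\}\{q(U) - (1-p)\,q(1-(1-p)(1-U))\}]$, which is the left-hand side of (\ref{eq:relXNBU}) once the factor $(1-p)$ is written next to the shifted quantile density as in the statement, and the right-hand side becomes $E[g'(Z^L)]\{E[X] - CVaR[X;p]\}$. The claim about finiteness of $E[g'(Z^L)]$ is inherited verbatim from Theorem \ref{theor:3}. I expect the only genuinely delicate point to be the computation of $Q_{X_{Q(p)}}$ and the bookkeeping of the $(1-p)$ factors — in particular making sure the chain-rule factor from differentiating $Q(1-(1-p)(1-u))$ is placed consistently with the paper's convention in (\ref{eq:relXNBU}); everything else is a mechanical substitution into Theorem \ref{theor:3} and Proposition \ref{prop:1}.
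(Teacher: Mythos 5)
Your proposal is correct and follows essentially the same route as the paper's own proof: compute the quantile function $Q_{X_{Q(p)}}(u)=Q(1-(1-p)(1-u))-Q(p)$ and quantile density $q_{X_{Q(p)}}(u)=(1-p)\,q(1-(1-p)(1-u))$ of the residual variable, identify its mean with $CVaR[X;p]$, and substitute into Proposition \ref{prop:1} and Theorem \ref{theor:3}. Your write-up merely makes explicit the stochastic-ordering verification and the inversion yielding $Q_{X_{Q(p)}}$, which the paper leaves implicit.
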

\begin{proof}
The mean of $X_{Q(p)}$ is expressed in (\ref{eq:defCVaR}), whereas due to (\ref{eq:defXQp}) its quantile 
function and quantile density for $p\in(0,1)$ and $u\in(0,1)$ are given respectively by 
\begin{equation}
 Q_{X_{Q(p)}}(u)=Q(1-(1-u)(1-p))-Q(p), 
 \label{eq:esprQXQpu}
\end{equation}
and 
$$  
 q_{X_{Q(p)}}(u)=q(1-(1-u)(1-p))(1-p).
$$
The assertion then follows from Proposition \ref{prop:1} and Theorem \ref{theor:3}. 
\end{proof}
\par 
We remark that the quantile function given in (\ref{eq:esprQXQpu}) is often used to model reliability 
data, since it represents the $u$th percentile residual life expressed in terms of quantile, as shown in 
Eq.\ (2.7) of Unnikrishnan Nair and Sankaran \cite{NaSa2009}. 
\par 
In the line of Proposition \ref{prop:NBU}  we now provide a similar result for IFR random variables.
\begin{proposition}\label{prop:agingIFR}
Let $X\in {\cal D}$ be IFR and such that $CVaR[X;p]$ 
is strictly decreasing for $0<p<1$. Then, for all $0<r<p<1$ we have
\begin{multline}
E[\{g(1)-g(U)\}\{q(1-(1-r)(1-U))(1-r)-q(1-(1-p)(1-U))(1-p)\}] \\
=E[g'(Z^L)]\{CVaR[X;r]-CVaR[X;p]\},
\nonumber
\end{multline}
where $U$ is uniformly distributed in $(0,1)$, and $Z^L:=\Psi^L(X_{Q(p)},X_{Q(r)})$ has density 
\begin{equation}
  f_{Z^L}(u)=\frac{Q(1-(1-r)(1-u))-Q(r)-Q(1-(1-p)(1-u))+Q(p)}{CVaR[X;r]-CVaR[X;p]}, 
 \qquad 0< u<1.
 \label{eq:desfZLIFR}
\end{equation}
\end{proposition}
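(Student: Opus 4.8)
The plan is to mimic the proof of Proposition~\ref{prop:NBU} exactly, replacing the pair $(X_{Q(p)},X)$ by the pair $(X_{Q(p)},X_{Q(r)})$ and invoking the IFR assumption where NBU was used before. First I would verify the stochastic ordering $X_{Q(p)}\leq_{st}X_{Q(r)}$ for $0<r<p<1$: this is precisely the IFR property, since IFR means $X_t\leq_{st}X_s$ for $t\geq s\geq0$, and here the conditioning thresholds satisfy $Q(p)\geq Q(r)$ because $Q$ is nondecreasing. Thus the hypothesis $X\leq_{st}Y$ of Theorem~\ref{theor:3} holds with $X=X_{Q(p)}$ and $Y=X_{Q(r)}$.

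Next I would compute the two quantile functions and quantile density functions. Using the representation $Q_{X_{Q(p)}}(u)=Q(1-(1-u)(1-p))-Q(p)$ from Eq.~(\ref{eq:esprQXQpu}) (and the analogous one with $r$ in place of $p$), I get
$$
 Q_{X_{Q(r)}}(u)-Q_{X_{Q(p)}}(u)=Q(1-(1-u)(1-r))-Q(r)-Q(1-(1-u)(1-p))+Q(p),
$$
which is exactly the numerator of $f_{Z^L}$ in (\ref{eq:desfZLIFR}). The means are $E[X_{Q(r)}]=CVaR[X;r]$ and $E[X_{Q(p)}]=CVaR[X;p]$ by (\ref{eq:defCVaR}), so the denominator $CVaR[X;r]-CVaR[X;p]$ is the required $E[Y]-E[X]$, and it is strictly positive by the assumption that $CVaR[X;\cdot]$ is strictly decreasing. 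Hence Proposition~\ref{prop:1} applies and $Z^L=\Psi^L(X_{Q(p)},X_{Q(r)})$ has density (\ref{eq:desfZLIFR}). Differentiating the chain-rule expression gives $q_{X_{Q(r)}}(u)=q(1-(1-u)(1-r))(1-r)$ and likewise with $p$, which produces the bracketed factor $\{q(1-(1-r)(1-U))(1-r)-q(1-(1-p)(1-U))(1-p)\}$ appearing on the left-hand side.

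Finally I would apply Theorem~\ref{theor:3} directly to the pair $(X_{Q(p)},X_{Q(r)})$ with the given differentiable $g$, obtaining
$$
 E[\{g(1)-g(U)\}\{q_{X_{Q(r)}}(U)-q_{X_{Q(p)}}(U)\}]=E[g'(Z^L)]\{CVaR[X;r]-CVaR[X;p]\},
$$
and substituting the explicit forms of the quantile densities yields the displayed multline identity. I should also check that the integrability hypotheses of Theorem~\ref{theor:3}, namely that $g'\cdot Q_{X_{Q(p)}}$ and $g'\cdot Q_{X_{Q(r)}}$ are integrable on $(0,1)$, follow from $X\in\mathcal{D}$ and the finiteness of the relevant $CVaR$ values together with differentiability of $g$; this is the one point requiring a little care rather than pure bookkeeping. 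The main obstacle, such as it is, is making sure the change-of-variable $u\mapsto 1-(1-u)(1-p)$ is handled consistently in both the quantile function and its derivative so that the signs and the $(1-p)$, $(1-r)$ factors come out correctly; everything else is a routine transcription of the NBU argument.
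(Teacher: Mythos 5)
Your proposal is correct and follows essentially the same route as the paper: the authors likewise note that IFR gives $X_{Q(p)}\leq_{st}X_{Q(r)}$ for $0<r<p<1$ and then proceed exactly as in Proposition~\ref{prop:NBU}, i.e.\ computing the quantile functions and densities of $X_{Q(p)}$ and $X_{Q(r)}$ and applying Proposition~\ref{prop:1} and Theorem~\ref{theor:3}. Your write-up merely makes explicit the bookkeeping that the paper leaves implicit.
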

\begin{proof}
Since $X$ is IFR, we have $X_{Q(p)}\leq_{st} X_{Q(r)}$ for all $0<r<p<1$. The proof thus 
proceeds similarly as  Proposition \ref{prop:NBU}.
\end{proof}
%
%\begin{example}\rm
%Let $X$ have distribution function
%$$
% F(x)=\left\{
% \begin{array}{ll}
% 1-e^{-(\alpha/2) x^2}, & 0\leq x<1\\  
% 1-e^{-\beta (x-1)-\alpha/2}, & 1\leq x<+\infty,
% \end{array}
% \right.
%$$
%with $0<\frac{\alpha}{2}\leq \beta<\alpha$. Such conditions on parameters $\alpha$ and $\beta$ imply that $X$ 
%is not IFR, and thus  Proposition \ref{prop:agingIFR} cannot be employed. However, it is not hard to see that 
%$X$ is IFRA (i.e., $[\barF(x)]^{1/x}$ is decreasing in $x>0$) and thus $X$ is NBU. Hence, 
%Proposition \ref{prop:NBU} ...
%\end{example}
%
\begin{example}\rm
Let $X=\max\{T_1,\ldots,T_N\}$, where $T_i$, $i\geq 1$, are independent, identically ${\rm Exp}(\lambda)$-distributed 
random variables, and where $N$ is a geometric random variable independent of $X_i$, $i\geq 1$, and with 
parameter $\delta\in (0,1)$. Then, $X$ is IFR (see Example 7.2 of Ross {\em et al.}\ \cite{RoShZh05}). 
Its quantile function and quantile density are respectively given by
$$
 Q(u)=\frac{1}{\lambda}\ln \left(\frac{\delta+(1-\delta)u}{\delta(1-u)}\right), 
 \qquad 
 q(u)=\left[\lambda (1-u)(\delta+(1-\delta)u)\right]^{-1},
 \qquad 0<u<1. 
$$
From (\ref{eq:defCVaR}) we see that the conditional value-at-risk of $X$ is: 
$$ 
 CVaR[X;p] = - \frac{ \ln (p+(1-p)\delta)}{\lambda(1-p)(1-\delta)}, 
 \qquad 0<p<1. 
$$
We note that $CVaR[X;p]$ is strictly decreasing in $p\in(0,1)$. Hence, from 
Proposition \ref{prop:agingIFR} we have, for $0<r<p<1$, 
\begin{multline}
E\left[\frac{g(1)-g(U)}{1-U}\,\left\{
\frac{1}{U(1-r)(1-\delta)+r(1-\delta)+\delta}
-\frac{1}{U(1-p)(1-\delta)+p(1-\delta)+\delta}\right\}\right] \\
=E[g'(Z^L)]\,\frac{1}{1-\delta} \left\{ \frac{ \ln (p+\delta (1-p))}{1-p}
- \frac{ \ln (r+\delta (1-r))}{1-r}\right\},
\nonumber
\end{multline}
where $U$ is uniformly distributed in $(0,1)$. The density of $Z^L=\Psi^L(X_{Q(p)},X_{Q(r)})$ 
can be obtained from (\ref{eq:desfZLIFR}) and the above given expressions. 
\end{example}
\par
We remark that Propositions \ref{prop:NBU} and \ref{prop:agingIFR} provide identities holding for specific 
ranges of the involved parameters. However, a `local version' of such results can be easily stated under 
mild assumptions. For instance Eq.\ (\ref{eq:relXNBU}) holds for a fixed $p\in(0,1)$, provided that  
$X_{Q(p)}\leq _{st} X$ and $CVaR[X;p]<E[X]$ for such fixed $p$. An example in which these conditions  
hold for some $p\in(0,1)$ is provided when $X$ is the maximum of two independent exponential distributions 
with unequal parameters, whose distribution is IFRA (increasing failure rate in average) and thus NBU, 
but not IFR (see, for instance, Klefsj\"o \cite{Kl83}). 
%----------------------------------------------------------------------------
\subsection{Risks}
%----------------------------------------------------------------------------
When comparing risks, the quantile function $Q(p)$ of $X$ plays a very important role. 
In fact, in this context it is known as {\em value-at-risk\/} and is denoted by $VaR[X;p]\equiv Q(p)$, $0<p<1$. 
However, to avoid discrepancies we adopt the notation $Q(p)$. Given a nonnegative random variable $X$ 
with finite mean, we define  
\begin{equation}
 \widetilde X_p:=\left\{\left.\frac{X-Q(p)}{Q(p)}\right|X>Q(p)\right\}
 \label{eq:defX_p}
\end{equation}
for all $p\in S_X:=\{u\in(0,1): Q(u)>0\}$. The random variable $\widetilde X_p$ is useful to compare 
risks of different nature, and can be viewed as {\em proportional conditional value-at-risk\/} because 
it measures the conditional upper tail from $Q(p)$ on, but proportional to $Q(p)$. Moreover, from 
(\ref{eq:defXQp}) and (\ref{eq:defX_p}) we have $\widetilde X_p=X_{Q(p)}/Q(p)$ for all $p\in S_X$. 
Hence, Eqs.\ (\ref{eq:defCVaR}) and (\ref{eq:defX_p}) yield 
\begin{equation}
 E[\widetilde X_p]=\frac{CVaR[X;p]}{Q(p)}, \qquad p\in S_X.
 \label{eq:medX_p}
\end{equation}
\par
In this case, we can consider conditions similar to NBU and IFR properties which are defined in 
terms of (\ref{eq:defX_p}):
\begin{itemize}
\item[(i)] $\widetilde X_p\leq_{st} X$ for all $0<p<1$ $\Leftrightarrow$ 
$\barF((1+x)Q(p))\leq \barF(x)(1-p)$ for all $0<p<1$ and $x\geq 0$.
\item[(ii)] $\widetilde X_p\leq_{st} \widetilde X_r$ for all $0<r<p<1$ $\Leftrightarrow$ 
$\frac{\barF((1+x)Q(p))}{\barF((1+x)Q(r))}\leq \frac{1-p}{1-r}$ for all $0<r<p<1$ and $x\geq 0$.
\end{itemize}
\begin{proposition}\label{prop:risk1}
Let $X\in {\cal D}$ be  such that $\widetilde X_p\leq_{st} X$ and $CVaR[X;p]< E[X]\,Q(p)$ for all $p\in S_X$. 
If $g:(0,1)\to \mathbb{R}$ is a differentiable function and if $U$ is uniformly distributed in $(0,1)$, 
then for all $p\in S_X$ we have
\begin{multline}
 E\left[\{g(1)-g(U)\}\left\{q(U)-\frac{1}{Q(p)}\,q(1-(1-p)(1-U))(1-p)\right\}\right] \\
 =E[g'(Z^L)]\left\{E[X]-\frac{1}{Q(p)}\,CVaR[X;p]\right\}
 \nonumber,
\end{multline}
where $Z^L=\Psi^L(\widetilde X_p,X)$ is a random variable with density function  
\[
f_{Z^L}(u)=\frac{(1+Q(u))Q(p)-Q(1-(1-p)(1-u))}{E[X]Q(p)-CVaR[X;p]}, \qquad 0<u<1.
\]
\end{proposition}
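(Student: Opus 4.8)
The plan is to recognize Proposition~\ref{prop:risk1} as a direct instance of Theorem~\ref{theor:3} applied to the ordered pair $(\widetilde X_p, X)$, exactly in the way Proposition~\ref{prop:NBU} is obtained. The first step is to identify the quantile function and quantile density of $\widetilde X_p$. Since $\widetilde X_p = X_{Q(p)}/Q(p)$, scaling by the positive constant $1/Q(p)$ multiplies the quantile function of $X_{Q(p)}$ by $1/Q(p)$; hence, using the expression $Q_{X_{Q(p)}}(u)=Q(1-(1-u)(1-p))-Q(p)$ from~(\ref{eq:esprQXQpu}),
\[
 Q_{\widetilde X_p}(u)=\frac{Q(1-(1-u)(1-p))-Q(p)}{Q(p)},
 \qquad
 q_{\widetilde X_p}(u)=\frac{q(1-(1-u)(1-p))(1-p)}{Q(p)},
 \qquad 0<u<1.
\]
I would also record $Q_{\widetilde X_p}(0)=0$, so that $\widetilde X_p\in{\cal D}$ once we know its mean is finite, and the mean itself is $E[\widetilde X_p]=CVaR[X;p]/Q(p)$ by~(\ref{eq:medX_p}).

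Next I would verify the hypotheses of Theorem~\ref{theor:3} for the pair $(\widetilde X_p,X)$ with the roles $X\rightsquigarrow\widetilde X_p$ and $Y\rightsquigarrow X$: membership in ${\cal D}$ is as above; the assumption $\widetilde X_p\leq_{st}X$ is imposed directly in the statement; and the condition $CVaR[X;p]<E[X]\,Q(p)$ is exactly $E[\widetilde X_p]<E[X]$, which guarantees the denominator $E[X]-CVaR[X;p]/Q(p)$ is strictly positive so that $Z^L=\Psi^L(\widetilde X_p,X)$ is well defined via Proposition~\ref{prop:1}. Its density is then read off from~(\ref{eq:deffZLu}):
\[
 f_{Z^L}(u)=\frac{Q(u)-Q_{\widetilde X_p}(u)}{E[X]-E[\widetilde X_p]}
 =\frac{(1+Q(u))Q(p)-Q(1-(1-p)(1-u))}{E[X]\,Q(p)-CVaR[X;p]},
\]
after clearing the common factor $Q(p)$ from numerator and denominator — this matches the claimed density. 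The integrability of $g'\cdot Q_{\widetilde X_p}$ on $(0,1)$ follows from that of $g'\cdot Q$ after a change of variable $t=1-(1-u)(1-p)$, which maps $(0,1)$ onto $(p,1)$ with bounded Jacobian, together with the fact that the constant term $-g'(u)Q(p)/Q(p)$ contributes an integrable piece whenever $g'$ is (this is implicit once $g'\cdot Q$ is integrable, since $Q$ is eventually bounded below on any interval bounded away from $0$; alternatively one simply assumes $g'$ integrable, as is tacit throughout).

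Finally, plugging $q_Y=q$, $q_X=q_{\widetilde X_p}$, $E[Y]=E[X]$, $E[X]=E[\widetilde X_p]=CVaR[X;p]/Q(p)$ into~(\ref{eq:theor3}) yields
\[
 E\!\left[\{g(1)-g(U)\}\Big\{q(U)-\tfrac{1}{Q(p)}q(1-(1-p)(1-U))(1-p)\Big\}\right]
 =E[g'(Z^L)]\Big\{E[X]-\tfrac{1}{Q(p)}CVaR[X;p]\Big\},
\]
which is the asserted identity. I do not anticipate a genuine obstacle here: the argument is a transcription of the proof of Proposition~\ref{prop:NBU} with $\widetilde X_p$ in place of $X_{Q(p)}$ and the extra scalar $1/Q(p)$ carried along. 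The only point demanding a little care is the bookkeeping of the factor $Q(p)$ so that numerator and denominator of $f_{Z^L}$ are simplified consistently, and checking that $Q(p)>0$ (guaranteed by $p\in S_X$) so that division by $Q(p)$ is legitimate throughout. Hence the proof reduces to the single sentence: the assertion follows from Proposition~\ref{prop:1} and Theorem~\ref{theor:3} applied to $\widetilde X_p=X_{Q(p)}/Q(p)$ and $X$, using~(\ref{eq:esprQXQpu}),~(\ref{eq:medX_p}) and the scaling property of quantile functions.
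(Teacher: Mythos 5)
Your proposal is correct and follows exactly the paper's own route: compute the quantile function, quantile density and mean of $\widetilde X_p$ (via the scaling $\widetilde X_p=X_{Q(p)}/Q(p)$ and Eq.~(\ref{eq:esprQXQpu}), (\ref{eq:medX_p})), then apply Proposition~\ref{prop:1} and Theorem~\ref{theor:3} to the pair $(\widetilde X_p,X)$ and simplify by multiplying numerator and denominator of $f_{Z^L}$ by $Q(p)$. The paper states this in one sentence; your version merely spells out the same verification in more detail.
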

\begin{proof}
Since the mean of $\widetilde X_p$ is (\ref{eq:medX_p}), and the quantile function and quantile 
density are given respectively by 
$$
 Q_{\widetilde X_p}(u)=\displaystyle\frac{1}{Q(p)}\,Q(1-(1-p)(1-u))-1,
 \qquad 
 q_{\widetilde X_p}(u)=\displaystyle\frac{1}{Q(p)}\,q(1-(1-p)(1-u))(1-p) 
$$
for each $p\in S_X$ and $0<u<1$, the proof follows from Proposition \ref{prop:1} and 
Theorem \ref{theor:3}.
\end{proof}
\par
An extension of Proposition \ref{prop:risk1} to a more general case is given hereafter. 
The proof is analogous, and then is omitted. 
\begin{proposition}\label{prop:risk2}
Let $X\in {\cal D}$ be such that 
$\widetilde X_p\leq_{st} \widetilde X_r$ and $CVaR[X;p]/Q(p)$ is strictly decreasing for 
all $p\in S_X$.  If $g:(0,1)\to \mathbb{R}$ is a differentiable function and if $U$ is 
uniformly distributed in $(0,1)$, then for all $r, p\in S_X$ such that $0<r<p<1$ we have 
\begin{multline}
 E\left[\{g(1)-g(U)\}\left\{\frac{q(1-(1-r)(1-U))(1-r)}{Q(r)}
 -\frac{q(1-(1-p)(1-U))(1-p)}{Q(p)}\right\}\right]\\
 =E[g'(Z^L)]\left\{\frac{CVaR[X;r]}{Q(r)}-\frac{CVaR[X;p]}{Q(p)}\right\},
 \nonumber
\end{multline}
where $Z^L=\Psi^L(\widetilde X_p,\widetilde X_r)$ is a random variable having density function  
\begin{equation}
f_{Z^L}(u)=\displaystyle
 \frac{Q(p)\,Q(1-(1-r)(1-u))-Q(r)\,Q(1-(1-p)(1-u))} 
 {Q(p)\,CVaR[X;r]- Q(r)\,CVaR[X;p]}, \qquad 0<u<1.
 \label{eq:densZLproprisk2}
\end{equation}
\end{proposition}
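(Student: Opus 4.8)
The plan is to follow the proof of Proposition \ref{prop:risk1} line by line, with the pair $(X,\widetilde X_p)$ used there replaced by the pair $(\widetilde X_p,\widetilde X_r)$ here; the whole argument then reduces to identifying the quantile function, the quantile density, and the mean of each of the two variables $\widetilde X_p$ and $\widetilde X_r$, and then invoking Proposition \ref{prop:1} and Theorem \ref{theor:3}.

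First I would use the identity $\widetilde X_p=X_{Q(p)}/Q(p)$ (noted just before (\ref{eq:medX_p})) together with the expression (\ref{eq:esprQXQpu}) for $Q_{X_{Q(p)}}(u)$ obtained in the proof of Proposition \ref{prop:NBU}, to get
$$
 Q_{\widetilde X_p}(u)=\frac{Q(1-(1-p)(1-u))}{Q(p)}-1,
 \qquad
 q_{\widetilde X_p}(u)=\frac{q(1-(1-p)(1-u))(1-p)}{Q(p)},
$$
for $0<u<1$, and likewise for $\widetilde X_r$. Note that $Q_{\widetilde X_p}(0)=Q(p)/Q(p)-1=0$, that $\widetilde X_p$ is absolutely continuous since $X$ is, and that $E[\widetilde X_p]=CVaR[X;p]/Q(p)$ is finite by (\ref{eq:medX_p}); hence $\widetilde X_p,\widetilde X_r\in{\cal D}$. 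Since by hypothesis $\widetilde X_p\leq_{st}\widetilde X_r$ for $0<r<p<1$, and since the strict decrease of $p\mapsto CVaR[X;p]/Q(p)$ forces $E[\widetilde X_p]<E[\widetilde X_r]$, Proposition \ref{prop:1} applies to $(\widetilde X_p,\widetilde X_r)$ and gives that $Z^L=\Psi^L(\widetilde X_p,\widetilde X_r)$ has density $\bigl(Q_{\widetilde X_r}(u)-Q_{\widetilde X_p}(u)\bigr)/\bigl(E[\widetilde X_r]-E[\widetilde X_p]\bigr)$; multiplying numerator and denominator by $Q(p)\,Q(r)$ and substituting the formulas above and (\ref{eq:medX_p}) yields exactly (\ref{eq:densZLproprisk2}).

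It then remains to apply Theorem \ref{theor:3} with $X$ replaced by $\widetilde X_p$ and $Y$ by $\widetilde X_r$, which produces
$$
 E[\{g(1)-g(U)\}\{q_{\widetilde X_r}(U)-q_{\widetilde X_p}(U)\}]=E[g'(Z^L)]\{E[\widetilde X_r]-E[\widetilde X_p]\},
$$
with $E[g'(Z^L)]$ finite; inserting the above expressions for $q_{\widetilde X_p}$, $q_{\widetilde X_r}$, $E[\widetilde X_p]$ and $E[\widetilde X_r]$ gives the stated identity. I expect the only mildly delicate point to be checking that composition with the affine map $u\mapsto 1-(1-p)(1-u)$, together with multiplication by the bounded constants $1/Q(p)$ and $1/Q(r)$, keeps $g'\cdot Q_{\widetilde X_p}$ and $g'\cdot Q_{\widetilde X_r}$ within the integrability hypotheses of Theorem \ref{theor:3}; combined with the observation that the strict monotonicity assumption is precisely what keeps the common denominator $E[\widetilde X_r]-E[\widetilde X_p]$ away from zero, this is routine bookkeeping and presents no real obstacle.
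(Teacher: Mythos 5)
Your proposal is correct and matches the paper's intent exactly: the paper omits the proof of Proposition \ref{prop:risk2}, stating only that it is analogous to that of Proposition \ref{prop:risk1}, and your argument is precisely that analogue, computing $Q_{\widetilde X_p}$, $q_{\widetilde X_p}$ and $E[\widetilde X_p]$ (and likewise for $\widetilde X_r$) and then invoking Proposition \ref{prop:1} and Theorem \ref{theor:3}. Your additional checks (that $\widetilde X_p,\widetilde X_r\in{\cal D}$, that the strict monotonicity of $CVaR[X;p]/Q(p)$ keeps the denominator nonzero, and the algebra recovering (\ref{eq:densZLproprisk2})) are all sound and simply make explicit what the paper leaves implicit.
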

\begin{example}\rm
Let $X$ have Rayleigh distribution, $F(x)=1-e^{-\alpha x^2}$, $x\geq 0$, with $\alpha>0$. Hence, 
the quantile function and the quantile density of $X$ are: 
$$
 Q(u)=\sqrt{|\ln(1-u)|/\alpha},
 \qquad
 q(u)=\sqrt{\alpha}\left[2(1-u)\sqrt{|\ln (1-u)|}\right]^{-1}, 
 \qquad 0<u<1. 
$$
Due to  (\ref{eq:defCVaR})  the conditional value-at-risk of $X$ is given by
$$ 
 CVaR[X;p] = \frac{\sqrt{\pi} \,{\rm erfc}\left[\sqrt{|\ln (1-p)|}\right]}{2\sqrt{\alpha}\,(1-p)}, 
 \qquad 0<p<1,
$$
where ${\rm erfc}[\cdot]$ denotes the complementary error function. It is not hard to verify that $CVaR[X;p]$ 
is strictly decreasing in $p\in(0,1)$. Moreover, since $X$ is IFR, from Proposition \ref{prop:agingIFR} 
we have, for all $0<r<p<1$,
\begin{multline}
 E\left[\frac{g(1)-g(U)}{1-U} 
 \left\{\frac{1}{\sqrt{|\ln [(1-r)(1-U)]|}}
 -\frac{1}{\sqrt{|\ln [(1-p)(1-U)]|}}\right\}\right] \\
=E[g'(Z^L)]\frac{\sqrt{\pi}}{ \alpha}\left\{
\frac{{\rm erfc}\left[\sqrt{|\ln (1-r)|}\right]}{1-r}
-\frac{{\rm erfc}\left[\sqrt{|\ln (1-p)|}\right]}{1-p}
\right\},
\nonumber
\end{multline}
where $g:(0,1)\to \mathbb{R}$ is a differentiable function, $U$ is uniformly distributed in $(0,1)$, 
and the density of $Z^L=\Psi^L(X_{Q(p)},X_{Q(r)})$ can be obtained from (\ref{eq:desfZLIFR}).  
Furthermore,  $CVaR[X;p]/Q(p)$ is strictly decreasing in $p\in(0,1)$. Hence, 
Proposition \ref{prop:risk2} yields the following indentity, for $0<r<p<1$, 
\begin{multline}
 E\left[\frac{g(1)-g(U)}{1-U} 
 \left\{\frac{1}{\sqrt{\ln (1-r)\ln [(1-r)(1-U)]}}
 -\frac{1}{\sqrt{\ln (1-p)\ln [(1-p)(1-U)]}}\right\}\right] \\
=E[g'(Z^L)]\frac{\sqrt{\pi}}{\alpha}\left\{
\frac{{\rm erfc}\left[\sqrt{|\ln (1-r)|}\right]}{(1-r)\sqrt{|\ln (1-r)|}}
-\frac{{\rm erfc}\left[\sqrt{|\ln (1-p)|}\right]}{(1-p)\sqrt{|\ln (1-p)|}}
\right\}.
\nonumber
\end{multline}
In this case, owing to  (\ref{eq:densZLproprisk2}), for $0<u<1$ the density of 
$Z^L=\Psi^L(\widetilde X_p,\widetilde X_r)$ is 
\begin{equation}
 f_{Z^L}(u)=\frac
 {2(1-p)(1-r)\left\{\sqrt{\ln(1-r)\ln[(1-p)(1-u)]}-\sqrt{\ln(1-p)\ln[(1-r)(1-u)]}\right\}}
 {\sqrt{\pi}\left\{ (1-r)\,{\rm erfc}\left[\sqrt{|\ln (1-p)|}\right] \sqrt{|\ln (1-r)|}
 -(1-p)\,{\rm erfc}\left[\sqrt{|\ln (1-r)|}\right] \sqrt{|\ln (1-p)|}\right\}}.
 \label{eq:densfZLUrisk}
\end{equation}
We remark that such density does not depend on $\alpha$. Some plots of (\ref{eq:densfZLUrisk}) 
are given in Figure \ref{fig:appl2}.
\end{example}
%
%--------------------------------------------------------------------
\begin{figure}[t] %%%%%%%%%%% FIGURA  %%%%%%%%%%%
\begin{center}
\epsfxsize=8.5cm
\epsfbox{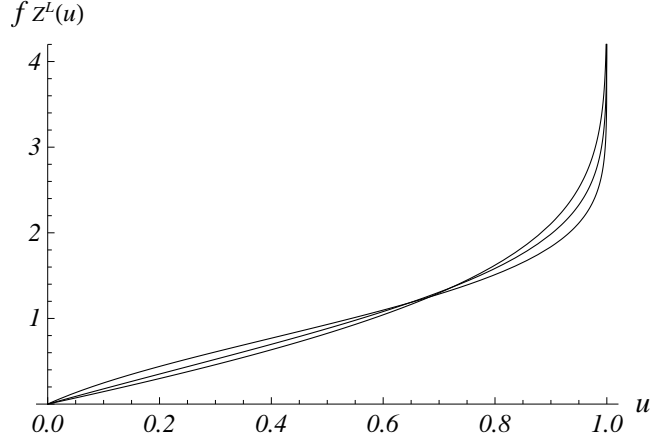}
\caption{Density (\ref{eq:densfZLUrisk}), for 
$(r,p)=(0.1,0.3), (0.4,0.6), (0.7,0.9)$ (from top to bottom near the origin).}
\label{fig:appl2}
\end{center}
\end{figure}
%--------------------------------------------------------------------
%
%----------------------------------------------------------------------------
\subsection{A model involving the average value-at-risk}\label{app3}
%----------------------------------------------------------------------------
Let $X\in {\cal D}$ have quantile function $Q(p)$. One can introduce a new 
family of random variables $X^*_v$, $0<v<1$, having distribution function 
\[
F^*_v(x)=\frac{Q(v x)}{Q(v)}, \qquad 0<x<1.
\]
If $X$ represents a risk, the {\em average value-at-risk\/} of $X$ is defined as 
\begin{equation}
 AVaR[X;v]=E[X\,|\,X\leq Q(v)]=\frac{1}{v}\int_0^v Q(u)\,{\rm d}u, \qquad 0<v<1.
 \label{eq:defCPE}
\end{equation} 
Note that the risk measure given in (\ref{eq:defCPE}) represents the conditional expected loss 
given that the loss $X$ is less than its value-at-risk.  
See Chapter 6 of Rachev {\em et al.}\ \cite{RaStFa2011} for results, properties 
and applications in mathematical finance of $AVaR[X;v]$. The average value-at-risk plays a 
significant role also in stochastic orders of interest in risk theory (see Jewitt \cite{Je89}). 
In addition, $AVaR[X;v]$ can be viewed as the L-moment of order 1 of the reversed quantile function 
(see Unnikrishnan Nair and Vineshkumar \cite{NaVi2010}). 
\par
We can easily show that the mean of $X^*_v$ can be expressed in terms of $AVaR[X;v]$ as 
\begin{equation}
 E[X^*_v]=1-\frac{1}{Q(v)}\,AVaR[X;v], \qquad 0<v<1.
 \label{eq:medXv}
\end{equation} 
Then, we have the following result, where $f$ denotes the density of $X$.
\begin{proposition}\label{application3}
Let $X\in {\cal D}$ be such 
that $X^*_v\leq_{st} X^*_w$ for $0<v<w<1$, and  $\frac{1}{Q(v)}\,AVaR[X;v]$ 
is strictly decreasing in $v\in (0,1)$. If $g:(0,1)\to \mathbb{R}$ is a differentiable function and if $U$ 
is uniformly distributed in $(0,1)$, then for all $0<v<w<1$ we have
\begin{multline}
E\left[\{g(1)-g(U)\}\left\{\frac{1}{w}f(Q(w)\,U)\,Q(w)-\frac{1}{v}f(Q(v)\,U)\,Q(v)\right\}\right] \\
=E[g'(Z^L)]\left\{\frac{AVaR[X;v]}{Q(v)}-\frac{AVaR[X;w]}{Q(w)}\right\},
\nonumber
\end{multline}
where $Z^L=\Psi^L(X^*_v,X^*_w)$ is a random variable with density function  
\[
 f_{Z^L}(u)=\frac{\frac{1}{w}\,F(Q(w)\,u)-\frac{1}{v}\,F(Q(v)\,u)}
 {\frac{1}{Q(v)}\,AVaR[X;v]-\frac{1}{Q(w)}\, AVaR[X;w]},
\qquad 0<u<1.
\]
\end{proposition}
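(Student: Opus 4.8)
\textbf{Proof plan for Proposition~\ref{application3}.}
The plan is to identify $Z^L$ as an instance of the operator $\Psi^L$ applied to the pair $(X^*_v, X^*_w)$ and then invoke Theorem~\ref{theor:3}. First I would compute the quantile function of $X^*_v$: since $F^*_v(x) = Q(vx)/Q(v)$ on $(0,1)$, inverting $Q(vx)/Q(v) = u$ gives $Q_{X^*_v}(u) = Q(vQ^{-1}(uQ(v)))/(v\cdot\text{something})$ --- more directly, one checks that $Q^*_v(u) := Q_{X^*_v}(u)$ satisfies $F^*_v(Q^*_v(u)) = u$, so $Q(v\,Q^*_v(u)) = u\,Q(v)$, and the quantile density follows by differentiation: $q_{X^*_v}(u) = \frac{1}{v}\,\frac{d}{du}\big[\tfrac{1}{Q(v)}\big]^{-1}\cdots$. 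Rather than invert explicitly, I would instead work directly with the identity in Theorem~\ref{theor:3}, which only requires $Q_{X^*_v}$, $Q_{X^*_w}$, their means, and the hypothesis $X^*_v \leq_{st} X^*_w$. The mean of $X^*_v$ is supplied by Eq.~(\ref{eq:medXv}), namely $E[X^*_v] = 1 - AVaR[X;v]/Q(v)$, so that $E[X^*_w] - E[X^*_v] = AVaR[X;v]/Q(v) - AVaR[X;w]/Q(w)$, which is positive precisely because $AVaR[X;v]/Q(v)$ is assumed strictly decreasing; together with $X^*_v \leq_{st} X^*_w$ this verifies the hypotheses of Proposition~\ref{prop:1} and Theorem~\ref{theor:3}.

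Next I would assemble the density of $Z^L = \Psi^L(X^*_v, X^*_w)$. By Proposition~\ref{prop:1} this density is $f_{Z^L}(u) = [Q_{X^*_w}(u) - Q_{X^*_v}(u)]/(E[X^*_w] - E[X^*_v])$. The cleaner route to the stated form $f_{Z^L}(u) = \big[\tfrac{1}{w}F(Q(w)u) - \tfrac{1}{v}F(Q(v)u)\big]\big/\big[\tfrac{1}{Q(v)}AVaR[X;v] - \tfrac{1}{Q(w)}AVaR[X;w]\big]$ is to observe that $Q_{X^*_v}(u)$ can be rewritten: from $Q(v\,Q^*_v(u)) = u\,Q(v)$ one gets, after the change of variable, that $\int_0^u Q_{X^*_v}(t)\,dt$ has a closed form, but in fact the statement's density is most transparent if one notes the complementary relation $1 - F^*_v(x)$ and uses $E[X^*_v] = \int_0^1 (1 - F^*_v(x))\,dx = 1 - \int_0^1 \tfrac{Q(vx)}{Q(v)}dx = 1 - \tfrac{1}{vQ(v)}\int_0^{v}Q(u)\,du$, confirming (\ref{eq:medXv}). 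To get the density, I would change variables $x = Q(w)u/Q(w)$... more precisely, the substitution $t \mapsto Q(vt)$ inside $L_{X^*_v}$-type integrals converts $Q_{X^*_v}$-integrals into $F$-integrals; carrying this through for the difference $Q_{X^*_w}(u) - Q_{X^*_v}(u)$ and matching denominators via (\ref{eq:medXv}) yields exactly the displayed $f_{Z^L}$.

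Finally, to obtain the displayed identity I would apply Theorem~\ref{theor:3} with this $X \rightsquigarrow X^*_v$ and $Y \rightsquigarrow X^*_w$. The left-hand side becomes $E[\{g(1) - g(U)\}\{q_{X^*_w}(U) - q_{X^*_v}(U)\}]$; I would then substitute the quantile densities $q_{X^*_v}(u) = \tfrac{1}{v}f(Q(v)u)\,Q(v)$ (obtained by differentiating $Q(v\,Q^*_v(u)) = uQ(v)$, which gives $v\,q(v\,Q^*_v(u))\,q_{X^*_v}(u) = Q(v)$, i.e. $q_{X^*_v}(u) = Q(v)/[v\,q(vQ^*_v(u))]$ --- and since $q = Q'$ and $f(Q(\cdot)) = 1/q(\cdot)$, this equals $\tfrac{1}{v}f(Q(v)u)Q(v)$ after simplification, using $vQ^*_v(u)$ being the quantile level at which $F$ equals $uQ(v)$... this needs the relation $F(Q(v)u)$, so care is needed here). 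The right-hand side is $E[g'(Z^L)]\,(E[X^*_w] - E[X^*_v])$, which by (\ref{eq:medXv}) equals $E[g'(Z^L)]\{AVaR[X;v]/Q(v) - AVaR[X;w]/Q(w)\}$, and the finiteness of $E[g'(Z^L)]$ is part of the conclusion of Theorem~\ref{theor:3}. \emph{The main obstacle} I expect is the bookkeeping in identifying $q_{X^*_v}(u)$ with $\tfrac{1}{v}f(Q(v)u)Q(v)$ and in showing that the density $f_{Z^L}$ in the $F$-form genuinely coincides with the $\Psi^L$-form from Proposition~\ref{prop:1}: this requires correctly handling the implicit inverse $Q^*_v$ and the interplay between $F$, $Q$, $q = Q'$, and $f$, rather than any deep new idea. $\hfill\Box$
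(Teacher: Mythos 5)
Your proposal is correct and follows essentially the same route as the paper: identify the quantile function $Q^*_v(u)=\frac{1}{v}F(Q(v)\,u)$ and quantile density $q^*_v(u)=\frac{1}{v}f(Q(v)\,u)\,Q(v)$ of $X^*_v$, use Eq.~(\ref{eq:medXv}) for the mean, and then apply Proposition~\ref{prop:1} and Theorem~\ref{theor:3}. Your derivation of $q^*_v$ by implicit differentiation of $Q(v\,Q^*_v(u))=u\,Q(v)$ is more roundabout than directly differentiating the explicit inverse, but it lands on the same expression and the argument is sound.
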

\begin{proof}
We recall that the mean of $X^*_v$ is (\ref{eq:medXv}). Moreover, its quantile function and quantile 
density for $v\in (0,1)$ are respectively given by 
\begin{equation}
 Q^*_v(u)=\frac{1}{v}F(Q(v)\,u), \qquad 
 q^*_v(u)=\frac{1}{v}f(Q(v)\,u)\,Q(v), \qquad 0<u<1.
 \label{eq:Qq*}
\end{equation}
The proof thus follows applying Proposition \ref{prop:1} and Theorem \ref{theor:3}. 
\end{proof}
\par
Let us now provide an equivalent condition for $X^*_v\leq_{st} X^*_w$, $0<v<w<1$, which was considered 
in Proposition \ref{application3}. 
\begin{proposition}\label{st-application3}
Let $X\in {\cal D}$. Then, $X^*_v\leq_{st} X^*_w$ for $0<v<w<1$ if, and only if, $x\,\tau(x)$ is decreasing 
for $x>0$, where $\tau(x)=\frac{{\rm d}}{{\rm d}x}\log F(x)$ 
is the reversed hazard rate function of $X$.
\end{proposition}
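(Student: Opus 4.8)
The plan is to translate the stochastic order $X^*_v \leq_{st} X^*_w$ directly into a statement about quantile functions via the equivalence recalled in Section \ref{section:mvt}, namely that $X^*_v \leq_{st} X^*_w$ holds if and only if $Q^*_v(u) \leq Q^*_w(u)$ for all $u \in (0,1)$. Using the expression (\ref{eq:Qq*}) for $Q^*_v$, this is the inequality
\[
 \frac{1}{v}\,F(Q(v)\,u) \;\leq\; \frac{1}{w}\,F(Q(w)\,u), \qquad 0<v<w<1,\ 0<u<1.
\]
So the first step is to rewrite the desired conclusion as: the map $v \mapsto \frac{1}{v} F(Q(v)\,u)$ is increasing on $(0,1)$ for every fixed $u \in (0,1)$. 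Since $F(Q(v)) = v$, it is convenient to change variables and set $y = Q(v)$, $z = Q(w)$, so that $v = F(y)$, $w = F(z)$, with $0 < y < z$; the inequality becomes $F(uy)/F(y) \leq F(uz)/F(z)$, i.e.\ $y \mapsto F(uy)/F(y)$ is increasing in $y>0$ for each fixed $u\in(0,1)$.

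The second step is to differentiate. Writing $\tau(x) = \frac{\rm d}{{\rm d}x}\log F(x) = f(x)/F(x)$ for the reversed hazard rate, one computes
\[
 \frac{\partial}{\partial y}\log\frac{F(uy)}{F(y)} = u\,\tau(uy) - \tau(y),
\]
and this is nonnegative for all $y>0$ and all $u\in(0,1)$ precisely when $uy\,\tau(uy) \geq y\,\tau(y)$ whenever $uy < y$, that is, precisely when $x\,\tau(x)$ is decreasing in $x>0$. This establishes both implications: if $x\tau(x)$ is decreasing then $u\,\tau(uy)-\tau(y)\geq 0$, so $F(uy)/F(y)$ is increasing in $y$, giving $X^*_v\leq_{st}X^*_w$; conversely, if $X^*_v\leq_{st}X^*_w$ for all $0<v<w<1$ then $F(uy)/F(y)$ is increasing in $y$ for each $u$, so $u\,\tau(uy)\geq\tau(y)$ for $0<u<1$, which upon writing $s=uy<y$ and multiplying by $y$ gives $s\,\tau(s)\geq y\,\tau(y)$, i.e.\ $x\tau(x)$ is decreasing.

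The main obstacle is a regularity/technical one rather than a conceptual one: to differentiate $\log(F(uy)/F(y))$ in $y$ we need $F$ to be differentiable with $\tau$ well-defined, which is guaranteed since $X\in{\cal D}$ is absolutely continuous; and to pass from ``the derivative is $\geq 0$'' back and forth with monotonicity of $x\tau(x)$ one should be a little careful that $x\tau(x)$ need not itself be differentiable, so the cleanest route for the converse is the change-of-variables argument with $s=uy$ just described, which only uses monotonicity in each variable separately and avoids differentiating $x\tau(x)$. One should also note that $Q$ is strictly increasing on the relevant range (so that $v\mapsto Q(v)$ is a genuine change of variable); since $X$ has a density and $Q(0)=0$ this causes no difficulty on $S_X$, and the statement is vacuous off the support. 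Assembling these observations gives the equivalence.
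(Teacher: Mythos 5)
Your proposal is correct and follows essentially the same route as the paper: both reduce $X^*_v\leq_{st}X^*_w$ to the monotonicity in $y$ of $F(uy)/F(y)$ (the paper writes this equivalently as $F(x)/F(x/u)$ increasing in $x$), both obtain the pointwise condition $u\,\tau(uy)\geq\tau(y)$ by logarithmic differentiation (the paper via $F(x)=\exp\{-\int_x^{+\infty}\tau(z)\,{\rm d}z\}$), and both conclude with the same substitution turning that condition into the decrease of $x\,\tau(x)$.
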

\begin{proof}
Given $0<v<w<1$, we have $X^*_v\leq_{st} X^*_w$ if, and only if, $Q_v^*(u)\leq Q_w^*(u)$ for all $u\in(0,1)$. 
Hence, due to the first of (\ref{eq:Qq*}), this property is equivalent to the following condition:
\begin{equation}
 \frac{F(x)}{F(\frac{x}{u})}\text{ \ is increasing in }x>0 \text{  for all }u\in(0,1).
 \label{st1-application3}
\end{equation}
Since $F(x)=\exp\{-\int^{+\infty}_x \tau(z)\,{\rm d}z\}$, $x>0$, condition (\ref{st1-application3}) holds if, and only if, 
%%
%\begin{equation*}
%\exp\Big\{-\int^{x/u}_x \tau(z)\,{\rm d}z\Big\}\text{ \ is increasing in }x>0 \text{  for all }u\in(0,1),
%\end{equation*}
%%
%or, equivalently,
$$
 \int^{x/u}_x \tau(z)\,{\rm d}z \text{  \ is decreasing in }x>0 \text{  for all }u\in(0,1).
$$
By differentiation we see that this condition is satisfied if, and only if, $u\,\tau(x)\geq \tau(\frac{x}{u})$ for all $x>0$ 
and $u\in(0,1)$. Finally, by setting $u=x/y$, with $x<y$, we obtain  $x\,\tau(x)\geq y\,\tau(y)$, this giving the proof. 
\end{proof}
\begin{example}\rm
Let $X$ have distribution function $F(x)=\exp\{-cx^{-\gamma}\}$, $x>0$, with $c,\gamma>0$. 
In this case, $\tau(x)=c\gamma x^{-(\gamma+1)}$ and thus $x\,\tau(x)$ is decreasing. Moreover, from 
(\ref{eq:defCPE}) we have $AVaR[X;v]=c^{1/\gamma} \Gamma(1-1/\gamma, -\ln v)$, where 
$\Gamma (\cdot,\cdot)$ is the incomplete gamma function. Hence, recalling Proposition \ref{st-application3}, 
the assumptions of Proposition \ref{application3} are satisfied. For instance, by setting for simplicity 
$\gamma=1$, for all $0<v<w<1$ we have 
$$
 E\left[\frac{g(1)-g(U)}{U^2}\left\{ w^{1/U-1} |\ln w|  - v^{1/U-1} |\ln v| \right\}\right]
 =E[g'(Z^L)]\left\{ \frac{1}{v} \, {\rm li}(v)\,\ln v  - \frac{1}{w} \, {\rm li}(w)\,\ln w\right\},
$$
where ${\rm li}(x)=\int_0^x (\ln t)^{-1}\,{\rm d}t$ is the logarithmic integral function. 
Moreover $Z^L=\Psi^L(X^*_v,X^*_w)$ has density function  
\begin{equation}
 f_{Z^L}(u)=\frac{w^{1/u-1}-v^{1/u-1}}
 {\frac{1}{v} \, {\rm li}(v)\,\ln v  - \frac{1}{w} \, {\rm li}(w)\,\ln w},
 \qquad 0<u<1.
 \label{eq:densfli}
\end{equation}
Some plots of $f_{Z^L}(u)$ are given in Figure \ref{fig:appl3}.  
\end{example}
%
%--------------------------------------------------------------------
\begin{figure}[t] %%%%%%%%%%% FIGURA  %%%%%%%%%%%
\begin{center}
\epsfxsize=7.5cm
\epsfbox{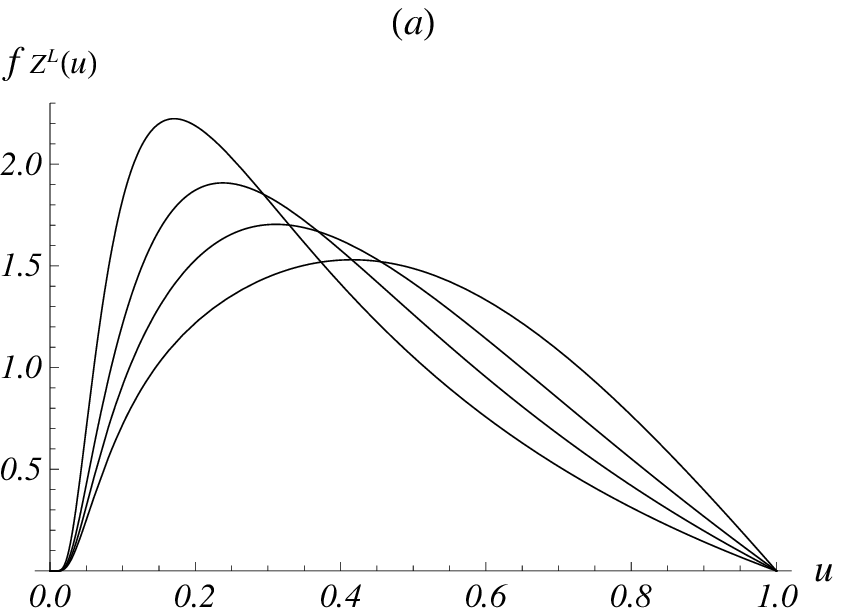}
$\;$
\epsfxsize=7.5cm
\epsfbox{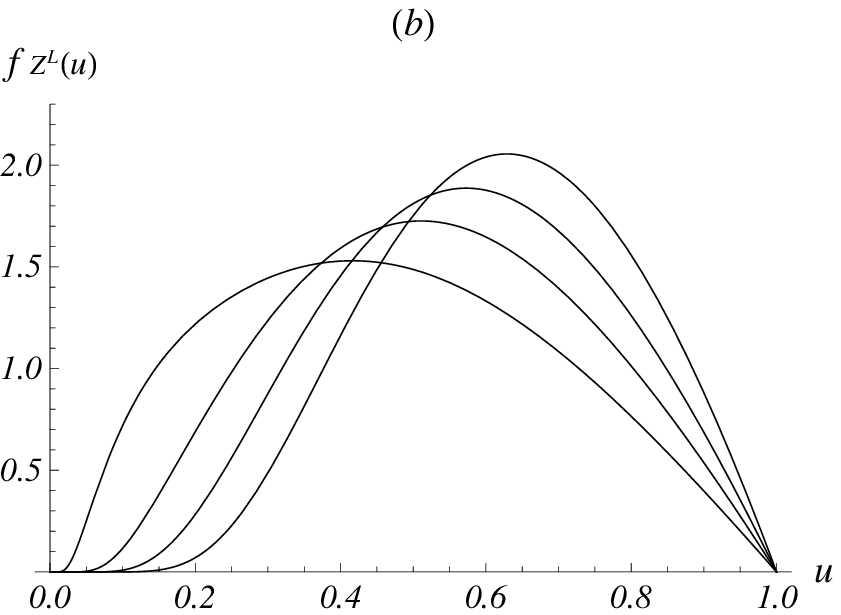} 
\caption{Density (\ref{eq:densfli}), for 
(a) $w=0.9$, $v=0.1, 0.3, 0.5, 0.7$ (from bottom to top near the origin), 
and 
(b) $v=0.1$, $w=0.3, 0.5, 0.7, 0.9$ (from top to bottom near $u=1$).}
\label{fig:appl3}
\end{center}
\end{figure}
%--------------------------------------------------------------------
%
%\begin{example}
%Let $X$ a random variable with distribution function 
%$F(x)=\exp\{-\frac{c}{e^x-1}\}$ for $x,c>0$. In this case, $\tau(x)=\frac{c e^x}{(e^x-1)^2}$ and Proposition \ref{st-application3} holds.
%\end{example}
\par 
Hereafter we show an example of distribution function that does not satisfy the conditions of 
Proposition \ref{st-application3}. 
\begin{counterexample}\label{counterexample}\rm
Let $X$ a random variable with distribution function and reversed hazard rate given respectively by 
(cf.\ Section 2 of Block {\em et al.}\ \cite{BlSaSi98}) 
$$
F(x) = \left\{
\begin{array}{l l}
\exp\{-1-\frac{1}{x}\}, & 0<x< 1\\
\exp\{\frac{x^2-5}{2}\}, & 1\leq x< 2\\
\exp\{-\frac{1}{x}\}, & 2\leq x< \infty,
\end{array}
\right.
\qquad 
 \tau(x) = \left\{
\begin{array}{l l}
\frac{1}{x^2}, & 0<x< 1\\
x, & 1\leq x< 2\\
\frac{1}{x^2}, & 2\leq x< \infty.
\end{array}
\right.
$$
It is not hard to see that $x\, \tau(x)$ is not decreasing, and thus Proposition \ref{application3} cannot 
be applied.
\end{counterexample}
%
%----------------------------------------------------------------------------
\subsection{A model based on increasing variables}
%----------------------------------------------------------------------------
Let $X\in {\cal D}$ have density $f(x)$ and quantile function $Q(p)$. We now consider
a random variable $\widehat X_v$, $v\in (0,1)$,  with support $(0,v)$ and distribution function  
\begin{equation}
 \widehat F_v(x):=\frac{Q(x)}{Q(v)},\qquad 0\leq x\leq v.
 \label{eq:defwFvx}
\end{equation}
From (\ref{eq:defwFvx}) it is not hard to see that 
\begin{equation}
 E[\widehat X_v]=v\left[1-\frac{1}{Q(v)}\,AVaR[X;v]\right], \qquad 0<v<1,
 \label{eq:medwFvx}
\end{equation}
where $AVaR[x;v]$ is the average value-at-risk defined in (\ref{eq:defCPE}). 
\begin{proposition}\label{application4}
Let $X\in{\cal D}$. If $g:(0,1)\to \mathbb{R}$ is a differentiable function and if $U$ is 
uniformly distributed in $(0,1)$, then for all $0<v<w<1$ we have 
\begin{multline}
 E[\{g(1)-g(U)\} \{f[Q(w)\,U]\,Q(w)-f[Q(v)\,U]\,Q(v)\}] \\
 =E[g'(Z^L)]\left\{w\left(1-\frac{AVaR[X;w]}{Q(w)}\right)-v\left(1-\frac{AVaR[X;v]} {Q(v)}\right)\right\},
 \nonumber
\end{multline}
where $Z^L=\Psi^L(\widehat X_v,\widehat X_w)$ is a random variable with density function  
\[
f_{Z^L}(u)=\frac{F[Q(w)\,u]-F[Q(v)\,u]}
{w\left(1-\frac{1}{Q(w)}\,AVaR[X;w]\right)-v\left(1-\frac{1}{Q(v)}\,AVaR[X;v]\right)}, 
\qquad 0<u<1.
\]
\end{proposition}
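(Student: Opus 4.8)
The plan is to follow the same route as in Propositions \ref{prop:NBU}--\ref{application3}: obtain the mean, quantile function and quantile density of the variables $\widehat X_v$, $v\in(0,1)$; verify that for $0<v<w<1$ the pair $(\widehat X_v,\widehat X_w)$ meets the hypotheses of Proposition \ref{prop:1} and Theorem \ref{theor:3}; and then read the density of $Z^L=\Psi^L(\widehat X_v,\widehat X_w)$ off Proposition \ref{prop:1} and the stated identity off Theorem \ref{theor:3}.

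First I would observe that $(\ref{eq:defwFvx})$ identifies $\widehat X_v$ with $v\,X^*_v$, where $X^*_v$ is the variable of Section \ref{app3}: indeed $P(vX^*_v\leq x)=F^*_v(x/v)=Q(x)/Q(v)=\widehat F_v(x)$. Hence, from $(\ref{eq:Qq*})$ and $(\ref{eq:medXv})$, one gets $Q_{\widehat X_v}(u)=v\,Q^*_v(u)=F(Q(v)\,u)$, $q_{\widehat X_v}(u)=v\,q^*_v(u)=f(Q(v)\,u)\,Q(v)$ for $0<u<1$, and $E[\widehat X_v]=v\,E[X^*_v]$ equals the right-hand side of $(\ref{eq:medwFvx})$. (Alternatively one inverts $(\ref{eq:defwFvx})$ directly.) In particular $\widehat X_v$ is absolutely continuous with finite mean, $Q_{\widehat X_v}(0)=F(0)=0$, and its quantile density exists, so $\widehat X_v\in{\cal D}$.

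Next I would check the structural hypotheses for $0<v<w<1$. Unlike in Proposition \ref{application3}, the stochastic order is here automatic: since $Q$ is nondecreasing, $Q(v)\leq Q(w)$, hence $Q_{\widehat X_v}(u)=F(Q(v)u)\leq F(Q(w)u)=Q_{\widehat X_w}(u)$ for all $u\in(0,1)$, i.e.\ $\widehat X_v\leq_{st}\widehat X_w$. Moreover $\widehat X_v$ and $\widehat X_w$ have distinct distributions (supports $(0,v)$ and $(0,w)$), so the mean inequality is strict, $E[\widehat X_v]<E[\widehat X_w]$; equivalently, differentiation of $\frac{1}{Q(v)}\int_0^{Q(v)}F(t)\,{\rm d}t$ gives $\frac{q(v)}{Q(v)^2}\int_0^{Q(v)}(v-F(t))\,{\rm d}t>0$. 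Thus $\Psi^L(\widehat X_v,\widehat X_w)$ is well defined, and the integrability conditions on $g'\cdot Q_{\widehat X_v}$ and $g'\cdot Q_{\widehat X_w}$ demanded by Theorem \ref{theor:3} are harmless because $Q_{\widehat X_v}\leq v<1$ and $Q_{\widehat X_w}\leq w<1$ are bounded.

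The conclusion is then immediate. Proposition \ref{prop:1} gives $f_{Z^L}(u)=\{Q_{\widehat X_w}(u)-Q_{\widehat X_v}(u)\}/\{E[\widehat X_w]-E[\widehat X_v]\}$, which is the displayed density once $Q_{\widehat X_v}(u)=F(Q(v)u)$ and $(\ref{eq:medwFvx})$ are inserted; and Theorem \ref{theor:3} applied to the pair $(\widehat X_v,\widehat X_w)$, with $q_{\widehat X_w}(U)-q_{\widehat X_v}(U)=f(Q(w)U)Q(w)-f(Q(v)U)Q(v)$ on the left and $E[\widehat X_w]-E[\widehat X_v]$ on the right, yields the asserted identity verbatim. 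I expect the only point needing any care to be the strict inequality $E[\widehat X_v]<E[\widehat X_w]$ — required for $\Psi^L$ to apply — which, however, follows at once from $\widehat X_v\leq_{st}\widehat X_w$ together with the distinctness of the two laws; everything else is substitution.
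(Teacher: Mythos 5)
Your proposal is correct and follows essentially the same route as the paper: compute $\widehat Q_v(u)=F[Q(v)\,u]$ and $\widehat q_v(u)=f[Q(v)\,u]\,Q(v)$, note $\widehat X_v\leq_{st}\widehat X_w$ with strictly ordered means, and conclude via Proposition \ref{prop:1} and Theorem \ref{theor:3}. The identification $\widehat X_v=v\,X^*_v$ and the explicit strictness check are nice touches the paper leaves implicit, but they do not change the argument.
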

\begin{proof}
From (\ref{eq:defwFvx}) it is easy to see that the quantile function and  the quantile density of $\widehat X_v$, 
$0<v<1$, are respectively given by 
$$
 \widehat Q_{v}(u)=F[Q(v)\,u], \qquad 
 \widehat q_{v}(u)=f[Q(v)\,u]\,Q(v), \qquad 0<u<1.
$$
Note that $\widehat X_v$ is stochastically increasing in $v\in(0,1)$, since 
$\widehat X_v\leq_{st} \widehat X_w$ for all $0<v<w<1$. Moreover, the given assumptions ensure 
that the mean (\ref{eq:medwFvx}) is strictly increasing in $v\in(0,1)$. 
The proof thus follows from Proposition \ref{prop:1} and Theorem \ref{theor:3}.
\end{proof}
\begin{example}\rm
Let $X$ be an exponentially distributed random variable  with parameter $\lambda>0$. Then, 
for all $0<v<1$ and $0<u<1$ we have 
$$
 E[\widehat X_v]=\frac{v}{\ln(1-v)}+1, 
 \qquad 
 \widehat Q_{v}(u)=1-(1-v)^u, 
 \qquad 
 \widehat q_{v}(u)=-(1-v)^u \ln(1-v).
$$
Note that the above functions do not depend on $\lambda$. 
Therefore, from Proposition \ref{application4} it follows 
\begin{multline}
E\left[\{g(1)-g(U)\}\left\{(1-v)^U \ln(1-v)-(1-w)^U \ln(1-w)\right\}\right]\\
=E[g'(Z^L)]\left\{\frac{w}{\ln(1-w)}-\frac{v}{\ln(1-v)}\right\},
\end{multline}
for all $0<v<w<1$. Here $Z^L=\Psi^L(\widehat X_v,\widehat X_w)$ is a random variable with 
density function  
\begin{equation}
f_{Z^L}(u)=\frac{(1-v)^u-(1-w)^u}{\frac{w}{\ln(1-w)}-\frac{v}{\ln(1-v)}}, 
\qquad 0<u<1.
 \label{eq:densfZLUesesp}
\end{equation}
See Figure \ref{fig:appl4} for some plots of $f_{Z^L}(u)$. 
\end{example}
%
%--------------------------------------------------------------------
\begin{figure}[t] %%%%%%%%%%% FIGURA  %%%%%%%%%%%
\begin{center}
\epsfxsize=8.5cm
\epsfbox{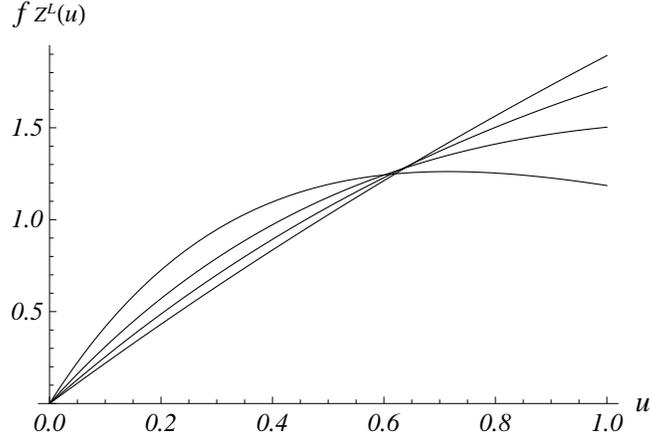}
\caption{Density (\ref{eq:densfZLUesesp}), for 
$(v,w)=(0.1,0.2), (0.3, 0.4), (0.5,0.6), (0.7,0.8)$ (from bottom to top near the origin).}
\label{fig:appl4}
\end{center}
\end{figure}
%--------------------------------------------------------------------
%
%---------------------------------------------------------------------------
\subsection{Concluding remarks} 
%---------------------------------------------------------------------------
In our view, the main issues of this paper are given 
in Proposition \ref{prop:1} and Theorem \ref{theor:3}. The first result allows us to construct new probability 
densities with support $(0,1)$ starting from suitable pairs of stochastically ordered random variables. 
The second result is useful to obtain equalities involving uniform-$(0,1)$ distributions and quantile 
functions. The cases treated in this section give only a partial view of the potentiality of   
Theorem \ref{theor:3}. Indeed, we considered some special cases in which the random variables 
$X$ and $Y$ involved in Theorem \ref{theor:3} belong to the same family of distributions. 
Other useful applications are likely to be developed under various choices of such variables, and 
specific selection of function $g$. This will be the object of future research. 
%---------------------------------------------------------------------------
\section*{Acknowledgements} 
%---------------------------------------------------------------------------
The research of A.\ Di Crescenzo and B.\ Martinucci  has been performed under partial support by 
GNCS-INdAM and Regione Campania (Legge 5). J.\ Mulero is supported by 
project MTM2012-34023, ``Comparaci\'on y dependencia en modelos probabil\'{\i}sticos con 
aplicaciones en fiabilidad y riesgos'', from Universidad de Murcia. 
%--------------------------------------------------------------------

%
%===========================================================================
%

\begin{thebibliography}{99} 
%--------------------------------------------------------------------
%
\small
% 
\bibitem{BeHuKh2003} 
Belzunce, F., Hu, T., \& Khaledi, B.E. (2003).
Dispersion-type variability orders. 
{\em Probability in the Engineering and Informational Sciences\/} 17: 305--334.
%
\bibitem{BePiRuSo12} 
Belzunce, F., Pinar, J.F., Ruiz, J.M., \& Sordo, M.A. (2012). 
Comparison of risks based on the expected proportional shortfall. 
{\em Insurance: Mathematics and Economics} 51: 292--302.  
%
\bibitem{BePiRuSo13} 
Belzunce, F., Pinar, J.F., Ruiz, J.M., \& Sordo, M.A. (2013). 
Comparison of concentration for several families of income distributions. 
{\em Statistics and Probability Letters} 83: 1036--1045.  
%
\bibitem{BlSaSi98} 
Block, H.W., Savits, T.H., \& Singh, H. (1998). 
The reversed hazard rate function. 
{\em Probability in the Engineering and Informational Sciences} 12: 69--90. 
%
\bibitem{De2005}
Denuit, M., Dhaene, J., Goovaerts, M., \& Kaas, R. (2005). 
{\em Actuarial Theory for Dependent Risks. Measures, Orders and Models}. 
John Wiley \& Sons, Chichester.
%
\bibitem{DiCr99} 
Di Crescenzo, A. (1999). 
A probabilistic analogue of the mean value theorem and its applications to reliability theory.
{\em Journal of Applied Probability} 36: 706--719. 
%
\bibitem{EsMe2006} 
Escobar, L.A. \& Meeker, W.Q. (2006).
A review of accelerated test models. 
{\em Statistical Science} 21: 552--577.
%
\bibitem{FaPe93} 
Fagiuoli, E. \& Pellerey, F. (1993).
New partial orderings and applications. 
{\em Naval Research Logistics} 40: 829--842.
%
\bibitem{FeKoMu98} 
Fernandez-Ponce, J.M., Kochar, S.C., \& Mu\~{n}oz-Perez, J. (1998).
Partial orderings of distributions based on right-spread functions. 
{\em Journal of Applied Probability} 35: 221--228. 
%
\bibitem{Gu2007} 
Gupta, R.C. (2007).
Role of equilibrium distribution in reliability studies. 
{\em Probability in the Engineering and Informational Sciences} 21: 315--334. 
%
\bibitem{Je89} 
Jewitt, I. (1989).
Choosing between risky prospects: the characterization of comparative statics results, and
location independent risk. 
{\em Management Science} 35: 60--70.
%
\bibitem{Kl83} 
Klefsj\"o, B. (1983).
Some tests against aging based on the total time on test transform. 
{\em Communications in Statistics Theory and Methods} 12: 907--927.
%
\bibitem{KoSh2002} 
Kochar, S.C., Li, X. \& Shaked, M. (2002). 
The total time on test transform and the excess wealth stochastic orders of distributions. 
{\em Advances in Applied Probability} 34: 826--845. 
%
\bibitem{Li94} 
Lin, G.D. (1994). 
On a probabilistic generalization of Taylor's theorem. 
{\em Statistics and Probability Letters} 19: 239--243. 
%
\bibitem{LiWi2000} 
Lin, X.S. \& Willmot G.E. (2000). 
The moments of the time of ruin, the surplus before ruin, and the deficit at ruin.
{\em Insurance: Mathematics and Economics} 27: 19--44.
%
\bibitem{MaWh93} 
Massey, W.A. \& Whitt, W. (1993). 
A probabilistic generalization of Taylor's theorem. 
{\em Statistics and Probability Letters} 16: 51--54. 
%
\bibitem{RaStFa2011} 
Rachev, S.T., Stoyanov, S.V., \& Fabozzi, F.J. (2011). 
{\em A Probability Metrics Approach to Financial Risk Measures}. 
Wiley-Blackwell, Chichester, UK.
%
\bibitem{RoShZh05} 
Ross, S.M., Shanthikumar, J.G., \& Zhu, Z. (2005). 
On increasing-failure-rate random variables. 
{\em Journal of Applied Probability} 42: 797--809. 
%
\bibitem{ShSh98} 
Shaked, M. \& Shanthikumar, J.G. (1998).
Two variability orders. 
{\em Probability in the Engineering and Informational Sciences} 12: 1--23.
%
\bibitem{ShSh07}  
Shaked, M. \& Shanthikumar, J.G. (2007).
{\em Stochastic Orders}. 
Springer Series in Statistics. Springer, New York. 
%
\bibitem{SiGo12} 
Singpurwalla, N.D. \& Gordon, A.S. (2014).
Auditing Shaked and Shanthikumar's `excess wealth'.
{\em Annals of Operations Research} 212: 3--19. 
%
\bibitem{SuSa12} 
Sunoj, S.M. \& Sankaran, P.G. (2012).
Quantile based entropy function. 
{\em Statistics and Probability Letters} 82: 1049--1053.  
%
\bibitem{SuSa13} 
Sunoj, S.M., Sankaran, P.G., \& Nanda, A.K. (2013).
Quantile based entropy function in past lifetime.
{\em Statistics and Probability Letters} 83: 366--372. 
%
\bibitem{NaPr2009} 
Unnikrishnan Nair, N. \& Preeth, M. (2009).
On some properties of equilibrium distributions of order $n$. 
{\em Statistical Methods and Applications} 18: 453--464. 
%
\bibitem{NaSa2009} 
Unnikrishnan Nair, N. \& Sankaran, P.G. (2009).
Quantile-based reliability analysis. 
{\em Communications in Statistics Theory and Methods} 38: 222--232. 
%
\bibitem{NaVi2010} 
Unnikrishnan Nair, N. \& Vineshkumar, B. (2010).
$L$-moments of residual life. 
{\em Journal of Statistical Planning and Inference} 140: 2618--2631.
%
\end{thebibliography}
\end{document}